\documentclass[12pt,reqno]{amsart}
\usepackage{amsmath,amssymb,amsthm,mathtools,calc,verbatim,enumitem,tikz,url,hyperref,mathrsfs,cite,fullpage}
\usepackage{bbm}
\usepackage{stmaryrd}
\usepackage{textcomp}
\usepackage{setspace}
\usepackage{amssymb}
\usepackage{amsthm}
\usepackage{amsmath}
\usepackage{graphicx}
\usepackage{marvosym}
\usepackage{empheq}
\usepackage{latexsym}
\usepackage{fontenc}
\usepackage{color}
\usepackage{hyperref}
\usepackage{cleveref}

\addtolength{\footskip}{\baselineskip/2}

\newtheorem{theorem}{Theorem}[section]
\newtheorem{lemma}[theorem]{Lemma}
\newtheorem{corollary}[theorem]{Corollary}
\newtheorem{conjecture}[theorem]{Conjecture}

\newtheorem{proposition}[theorem]{Proposition}

\newtheorem*{claim*}{Claim}

\theoremstyle{definition}
\newtheorem{definition}[theorem]{Definition}

\newtheorem*{qu*}{Question}
\theoremstyle{remark}
\newtheorem{remark}[theorem]{Remark}

\newcommand\N{\mathbb{N}}

\newcommand\E{\operatorname{\mathbb{E}}}

\newcommand\cC{\mathcal{C}}

\renewcommand\Pr{\operatorname{\mathbb{P}}}

\newcommand\eps{\varepsilon}
\renewcommand\leq{\leqslant}
\renewcommand\geq{\geqslant}
\renewcommand\le{\leqslant}
\renewcommand\ge{\geqslant}

\pagestyle{plain}

	\def\eps{\varepsilon}

	\def\Prob{\mathbb{P}}

	\def\N{\mathbb{N}}

	\def\<{\langle }
	\def\>{\rangle }

\begin{document}

\title{Near Rainbow Hamilton Cycles in Dense Graphs}
\author{Danni Peng and Zhifei Yan}

\address{IMPA, Estrada Dona Castorina 110, Jardim Bot\^anico,
Rio de Janeiro, 22460-320, Brazil}\email{danni.peng@impa.br
\\zhifei.yan@impa.br}

\thanks{}

\begin{abstract}
Finding near-rainbow Hamilton cycles in properly edge-coloured graphs was first studied by Andersen, who proved in 1989 that every proper edge colouring of the complete graph on $n$ vertices contains a Hamilton cycle with at least $n-\sqrt{2n}$ distinct colours. This result was improved to $n-O(\log^2 n)$ by Balogh and Molla in 2019. 
 
In this paper, we consider Anderson's problem for general graphs with a given minimum degree. We prove every globally $n/8$-bounded (i.e. every colour is assigned to at most $n/8$ edges) properly edge-coloured graph $G$ with $\delta(G) \geq (1/2+\varepsilon)n$ contains a Hamilton cycle with $n-o(n)$ distinct colours. Moreover, we show that the constant $1/8$ is best possible.

\end{abstract}
	
	\maketitle
\section{Introduction}

A \emph{proper} edge colouring of a graph $G$ assigns colours to $E(G)$ such that every pair of incident edges receive  different colours. An attractive and longstanding open question is, given an proper edge-coloured complete graph $K_n$, can you always find a rainbow Hamilton cycle, that is, a Hamilton cycle with each edge assigned a different colour? In 1980, Hahn \cite{Hahn} conjectured that every properly edge-coloured $K_n$ with $n\geq 5$ contains a rainbow Hamilton path.\footnote{Observe that if we colour the edges of $K_4$ by assigning three different colours to the three disjoint matchings of size $2$, then there is no rainbow Hamilton path.} However, Maamoun and Meyniel \cite{MM} found a properly edge-coloured $K_n$ without a rainbow Hamilton path for $n=2^k$, which refutes Hahn's conjecture. In 2019, Montgomery, Pokrovskiy and Sudakov \cite{MPS} showed that every properly edge-coloured $K_n$, in which at most $cn$ colours are assigned to 
more than $cn/2$ edges, 
must contain $cn/2$ edge-disjoint rainbow Hamilton cycles, for any constant $0<c<1$ and sufficiently large $n$. In another direction, Anderson \cite{A} proved that every properly edge-colored $K_n$ contains a Hamilton cycle in which at least $n-\sqrt{2n}$ distinct colours appear. Recently, Balogh and Molla \cite{BM} improved the number of distinct colours to $n-O(\log^2 n)$.   

More recently, the study of finding rainbow Hamilton cycles has been extended to general graphs. Coulson and Perarnau \cite{CP} considered this question on  edge-coloured Dirac graphs (i.e. graphs on $n$ vertices with minimum degree at least $n/2$). They proved that every $o(n)$-bounded edge-coloured Dirac graph contains a rainbow Hamilton cycle, where a colouring is $b$-\emph{bounded} if every colour is assigned to at most $b$ edges. Note that Coulson and Perarnau did not require the colouring to be proper; rather, they needed every colour class in the colouring to be strongly controlled, in the sense that it contains at most $o(n)$ edges.

The results mentioned above illustrate two natural ways in which we can constrain an edge colouring in order to find a rainbow Hamilton cycle: \emph{local} and \emph{global} boundedness. Local boundedness refers to bounding the number of edges incident to each vertex that can be assigned the same colour; for example, we can ask the colouring to be proper. In contrast, global boundedness involves controlling the total number of edges of the same colour throughout the entire graph, like the property of being $o(n)$-bounded that was used by Coulson and Perarnau. It is natural to ask whether it is possible to weaken this assumption on the global boundedness of the colouring, if we consider only proper edge-colourings of a Dirac graph. 
Our main result offers a partial answer to this question, by considering near-rainbow Hamilton cycles in graphs $G$ on $n$ vertices with $\delta(G)\geq(1/2+\varepsilon)n$.

\begin{theorem}\label{thm:main}
Let $\varepsilon > 0$ be constant, let $n \in \N$, and let $G$ be a graph on $n$ vertices with minimum degree  $\delta(G) \geq (1/2 + \varepsilon)n$. Then every proper $n/8$-bounded edge-colouring of $G$ contains a Hamilton cycle with $n-o(n)$ distinct colours.
\end{theorem}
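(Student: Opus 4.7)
My plan is to follow the standard absorbing-method blueprint for Hamilton cycles in Dirac graphs, adapted to track colours: reserve a short rainbow absorbing path $P_A$, greedily build a long rainbow path $P$ on the remaining vertices, and close using $P_A$ to swallow any uncovered vertices.

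For Phase 1 (\emph{absorber construction}), I would build $P_A \subset G$ of length $\mu n$, for some $\mu = \mu(\eps) = o(1)$, with the absorbing property that for every vertex set $R \subset V(G) \setminus V(P_A)$ with $|R| \leq \mu^2 n$, the induced subgraph $G[V(P_A) \cup R]$ admits a Hamilton path with the same endpoints as $P_A$. The R\"odl--Ruci\'nski--Szemer\'edi template applies: for each $v \in V(G)$, the bound $\delta(G) \geq (1/2+\eps)n$ yields an abundance of ``absorbing gadgets'' (short paths $x_1 x_2 x_3$ with $v x_1, v x_3 \in E(G)$, so $v$ can be inserted to replace the segment $x_1 x_2 x_3$ by $x_1 v x_3 x_2$ or a similar rerouting). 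A random sampling of gadgets produces $P_A$, and the proper $n/8$-bounded colouring ensures with positive probability that the sampled gadgets use pairwise distinct colours, yielding a rainbow $P_A$.

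For Phase 2 (\emph{rainbow body}), inside $G' := G - V(P_A)$ I would build a rainbow path $P$ covering all but $o(n)$ vertices of $G'$, with endpoints chosen to connect to $P_A$. The method is greedy extension augmented with rainbow-preserving P\'osa rotations. At a current endpoint $v$, the degree in $G'$ is at least $(1/2+\eps - o(1))n$; among these neighbours at most $|V(P)|$ are already used, and since the colouring is proper each used colour blocks at most one edge at $v$, so at most $|V(P)|$ further edges are blocked by colour. Direct extension succeeds while the path is short; when it fails, apply P\'osa rotations restricted to chords whose colour is not on $P$. Standard rotation analysis shows that the set of endpoints reachable by rotations is large, and the $n/8$-bound ensures that globally the number of colour-blocked chords is limited, so a rainbow rotation is available at each step until $|V(P)| = n - o(n)$.

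Phase 3 (\emph{closure}) uses the absorbing property of $P_A$: the $o(n)$ uncovered vertices are incorporated into the $P_A$ segment, forming a Hamilton cycle. Edges of $P$ are pairwise distinctly coloured, and the $P_A$ segment contributes at most $o(n)$ repeated colours, yielding $n - o(n)$ distinct colours overall. The \textbf{main obstacle} is Phase 2: ensuring the rainbow P\'osa rotations succeed throughout, and in particular that no ``colour bottleneck'' occurs in which every candidate chord shares a colour already used on $P$. This is exactly where the sharpness of the constant $1/8$ should enter: the bound $n/8$ just barely guarantees enough rainbow chords at every step, matching the extremal construction that (as the authors assert) rules out any larger boundedness constant.
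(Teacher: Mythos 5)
Your overall architecture (absorber $+$ near-spanning rainbow structure $+$ connection/absorption) matches the paper's, and Phases 1 and 3 are fine in spirit — in fact you ask for more than needed, since the absorbing path has only $o(n)$ vertices and so need not be rainbow at all. The genuine gap is Phase 2, which is precisely the technical heart of the problem, and the argument you sketch there does not work. A single greedy rainbow extension stalls long before $n-o(n)$: when the current rainbow path has $k$ edges, an endpoint $v$ has at least $(1/2+\eps)n - o(n)$ neighbours, of which up to $k+1$ lie on the path and, because the colouring is proper, up to $k$ further edges at $v$ (one per used colour) are colour-blocked; so rainbow extension is only guaranteed while $k < (1/4+\eps/2)n$. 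Beyond that, the global count you invoke does not rescue the rotation argument: with $k = n - o(n)$ colours already used and each colour on up to $n/8$ edges, the blocked edges can number about $n^2/8$, which is comparable to (indeed, up to half of) all edges of $G$, and nothing prevents them from concentrating exactly on the chords incident to the endpoint set produced by P\'osa rotations. Moreover, rainbow-preserving rotations are themselves delicate (each rotation deletes a path edge and inserts a chord, so the colour bookkeeping changes at every step), and ``standard rotation analysis'' has no known colour-respecting version that reaches $n-o(n)$ distinct colours at boundedness $n/8$; your own admission that this is the ``main obstacle'' is accurate, but it is the whole theorem.

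The paper avoids this bottleneck with a different mechanism: it first passes to an $r$-regular spanning subgraph with $r \approx \delta(G)/2$ (Christofides--K\"uhn--Osthus), randomly partitions the vertices into $n^{\alpha}$ slices and the colours into matching classes so that each bipartite slice is near-regular with a suitably bounded colouring, and then applies the Montgomery--Pokrovskiy--Sudakov semi-random (nibble) rainbow-matching lemma to each slice; concatenating these near-perfect rainbow matchings gives a rainbow path \emph{forest} covering $n - o(n)$ vertices with few paths, which is then stitched together through a reservoir and completed by the (non-rainbow) absorber. The constant $1/8$ enters there, as the nibble needs the boundedness to be below roughly $(1/4 - o(1))\,\delta(G) \approx n/8$ per slice, not through any local rotation count. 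To repair your proposal you would need to replace Phase 2 by such a global/semi-random construction (or supply a genuinely new colour-respecting rotation--extension analysis, which is not available).
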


Coulson and Perarnau~\cite[Theorem 1.4]{CP} showed for any constant $c > 1/8$, there exists a Dirac graph on $n$ vertices with a $cn$-bounded colouring that contains no rainbow Hamilton cycle. Motivated by their construction, we will show that the constant $1/8$ in Theorem~\ref{thm:main} is optimal, in the following sense.

\begin{proposition}\label{prop:necess}
For each constant $c > 1/8$, there exists $n_0=n_0(c)$ and $\varepsilon = \varepsilon(c) > 0$ such that the following holds for every $n > n_0$. There exists a graph $G$ on $n$ vertices with $\delta(G)\geq (1/2+\varepsilon)n$, and a proper $cn$-bounded edge-colouring of $G$, such that every Hamilton cycle in $G$ contains at most $(1-\varepsilon)n$ distinct colours.
\end{proposition}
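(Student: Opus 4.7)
The plan is to adapt the tightness construction of Coulson and Perarnau~\cite{CP}, who exhibited, for any $c > 1/8$, a Dirac graph together with a proper $cn$-bounded edge-colouring admitting no rainbow Hamilton cycle. I need to strengthen both the minimum degree requirement (from $\delta \ge n/2$ to $\delta \ge (1/2+\eps)n$) and the conclusion (from missing one colour to missing $\eps n$ colours per Hamilton cycle).

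The graph $G$ I would use has a balanced bipartition $V(G) = A \cup B$ with $|A|=|B|=n/2$, a dense bipartite graph $H = G[A,B]$ between the parts, and specific internal structure inside $A$ and $B$. To force many $A$--$B$ crossings in every Hamilton cycle, I would introduce a slight asymmetry by making $G[A]$ and $G[B]$ near-bipartite, say $G[A] = K_{A_1, A_2}$ with $|A_1|-|A_2| = \Theta(\eps n)$ (and similarly for $B$); a standard imbalance argument then shows that every Hamilton cycle uses at least $\Omega(\eps n)$ $A$--$B$ crossings, since each maximal sub-path of the Hamilton cycle contained in $A$ is a path in the bipartite graph $G[A]$ and must therefore alternate between $A_1$ and $A_2$, contributing at most $1$ to the difference $|A_1|-|A_2|$; hence the number of such sub-paths---and so the number of pairs of crossings leaving $A$---is at least $|A_1|-|A_2|$. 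For the colouring, I would give distinct colours to every edge inside $A$ and $B$, and colour $H$ with a proper $cn$-bounded scheme engineered to force repetitions among the crossings.

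The main obstacle is designing the colouring of $H$ so that every Hamilton cycle is forced to contain $\eps n$ colour collisions among its crossing edges. A naive pigeonhole count---using only that $H$ admits at most $|E(H)|/(cn)$ colour classes and that every Hamilton cycle contains $t$ crossings---only yields the weaker threshold $c > 1/4$, because in the simplest ``one side is an independent set'' variant (taking $|B| = (1/2-\eps)n$ independent, $|A| = (1/2+\eps)n$, $G[A]$ a clique, $G[A,B]$ complete bipartite) the ratio $|E(H)|/(nt)$ tends to $1/4$ as $\eps \to 0$. To push the threshold down to $1/8$ as claimed, the construction must be sharper: the most plausible route is to use a four-part structure whose forced crossings effectively live in a subgraph of total size roughly $n^2/8$ rather than $n^2/4$, combined with large matching colour classes of size $cn > n/8$, so that a refined pigeonhole / double-counting argument delivers $\eps n$ collisions. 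Verifying that the resulting graph simultaneously (a) satisfies $\delta(G) \ge (1/2+\eps)n$, (b) admits a proper $cn$-bounded colouring of $H$, and (c) forces $\eps n$ colour repetitions in every Hamilton cycle is the technical heart of the proof, and the step I expect to be the most delicate.
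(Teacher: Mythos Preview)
Your proposal has a genuine gap at exactly the point you flag as ``the most delicate'': you never produce a construction achieving the threshold $1/8$, and the direction you are heading --- forcing repetitions among the \emph{crossing} edges $H = G[A,B]$ --- cannot get there. As you observe, in the independent-set variant $|B| \approx n/2$, the bipartite graph $G[A,B]$ must be nearly complete (else vertices in $B$ fail the degree condition), so $e(H) \approx n^2/4$; any $cn$-bounded colouring then uses at least $n/(4c)$ colours, and since a Hamilton cycle has only about $n$ crossings you need $c > 1/4$ to force even one collision. Passing to a four-part near-bipartite structure does not help: the crossing graph still needs $\Theta(n^2)$ edges to meet the degree bound, and the same arithmetic applies.

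The paper's construction flips your idea: it puts the few-colour part on the \emph{internal} graph, not on the crossings. Take $|B| = n/2 - q$ independent, $|A| = n/2 + q$, with $G[A,B]$ complete bipartite and every crossing edge given its own fresh colour. Then every Hamilton cycle uses exactly $2q$ edges of $G[A]$. The point is that $G[A]$ need only have minimum degree about $q$ (so that vertices of $A$ reach total degree $(n/2 - q) + q = n/2$, plus a little extra for the $\varepsilon$), hence only about $qn/4$ edges. A proper colouring of $G[A]$ with roughly $2q$ colour classes of size $n/8$ covers these edges, and the paper shows via a random-matching argument (their Lemma~5.2) that such a coloured graph exists. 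With only $2q - t$ colours available on $G[A]$ and $2q$ edges of $G[A]$ in every Hamilton cycle, you get $t$ repetitions; choosing $q \approx \varepsilon n$ and $t = \varepsilon n$ gives the proposition. The factor-of-two saving over your approach comes precisely from the fact that the internal graph is sparse (degree $\approx q$ on $n/2$ vertices) rather than dense (degree $\approx n/2$ on $n/2$ vertices).
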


If we completely remove the condition on the global boundedness of the colouring, then we cannot even expect to find a near-rainbow Hamilton cycle. Indeed, by Vizing's theorem there exists a proper edge-colouring of any $d$-regular graph $G$ using at most $d+1$ colours, and in such a colouring every Hamilton cycle obviously contains at most $d+1$ distinct colours. In this setting, one can easily deduce the following corollary from Theorem~\ref{thm:main}. 


\begin{corollary}\label{cor:only:proper}
Let $\varepsilon > 0$ be constant, let $n \in \N$, and let $G$ be a graph on $n$ vertices with minimum degree $\delta(G) \geq (1/2 + \varepsilon)n$. Then every proper edge-colouring of $G$ contains a Hamilton cycle with at least $n/4 - o(n)$ distinct colours.
\end{corollary}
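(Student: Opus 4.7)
The plan is to deduce Corollary~\ref{cor:only:proper} directly from Theorem~\ref{thm:main} by refining the given proper edge-colouring $\chi$ of $G$ into an $n/8$-bounded proper edge-colouring $\chi'$, applying the theorem to $\chi'$, and then translating the resulting bound back to $\chi$. The key observation that makes this reduction lossless up to a factor of $4$ is that, in any proper edge-colouring, each colour class is a matching in $G$ and therefore has size at most $n/2$, which is exactly $4$ times the threshold $n/8$ required by Theorem~\ref{thm:main}.

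First I would construct $\chi'$ as follows. For each colour class $M$ of $\chi$, partition $M$ arbitrarily into $\lceil |M|/(n/8)\rceil$ submatchings, each of size at most $n/8$, and assign a fresh colour in $\chi'$ to every submatching. Since $|M|\le n/2$, each $\chi$-colour is split into at most $4$ $\chi'$-colours. Each $\chi'$-colour class is a subset of a matching and hence itself a matching, so $\chi'$ is still a proper edge-colouring of $G$, and by construction it is $n/8$-bounded.

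Next I would apply Theorem~\ref{thm:main} to $G$ with the colouring $\chi'$: there is a Hamilton cycle $H$ in $G$ containing at least $n-o(n)$ distinct $\chi'$-colours. Let $k$ denote the number of distinct $\chi$-colours appearing on $H$. Because each $\chi$-colour corresponds to at most $4$ colours of $\chi'$, the edges of $H$ carry at most $4k$ distinct $\chi'$-colours, so $4k \ge n-o(n)$, giving $k\ge n/4-o(n)$, which is the desired conclusion.

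I expect no real obstacle here: the argument is a short reduction, and the multiplicative loss of $4$ is precisely the ratio $(n/2)/(n/8)$ between the largest possible matching in $G$ and the global boundedness parameter in Theorem~\ref{thm:main}. The only point that needs a brief check is that partitioning a matching into arbitrary submatchings preserves properness of the colouring, which is immediate. One might also wonder whether the factor $1/4$ is sharp; this is not part of the corollary's statement, so I would not try to optimise it, but I would briefly note that any improvement would have to come from an improvement of the constant $1/8$ in Theorem~\ref{thm:main} itself, which Proposition~\ref{prop:necess} shows is best possible.
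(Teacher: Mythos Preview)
Your proposal is correct and follows essentially the same approach as the paper: observe that a proper colouring is $n/2$-bounded, split each colour class into at most four parts to obtain an $n/8$-bounded proper colouring, apply Theorem~\ref{thm:main}, and divide the resulting bound by~$4$. The paper's proof is terser but identical in substance.
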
 



The first step in the proof of \Cref{thm:main} is to construct an `absorber', using a simplified version of a classic construction of R\"{o}dl, Ruci\'{n}ski and Szemer\'{e}di~\cite{RRS}. More precisely, we will find a path $A$ of length $o(n)$ such that for any set $U$ of size $o(v(A))$, there exists a path with vertex set $V(A) \cup U$ and the same endpoints as $A$ (see Lemma~\ref{lem:absorber}). We will also find a `reservoir' set $R$ of size $o(v(A))$ with the property that every pair of vertices of $G$ has many common neighbours in $R$ (see Lemma~\ref{lem:reservoir}). 

The next step is to construct a rainbow path forest $F$ covering almost all of the vertices of $G$. To do so, we will adapt an approach of Montgomery, Pokrovskiy and Sudakov~\cite{MPS}. We will then connect these paths (and $A$) using vertices from $R$, and finally use the property of $A$ to absorb the leftover vertices, noting that the resulting Hamilton cycle has at least as many distinct colours as the rainbow path forest $F$. Since we have no control over the colours of the connecting edges, however, we are unable to find a rainbow Hamilton cycle using this method. 

The remainder of this paper is organized as follows. In \Cref{sec:rainbow forest}, we adapt the method of~\cite{MPS} to construct a rainbow path forest in $G$, in \Cref{sec:absorber} we construct our absorber and our reservoir, and in \Cref{sec:HC} we deduce \Cref{thm:main}. 
Finally, in \Cref{sec:counterexample}, we give a construction of a properly coloured graph with no near-rainbow Hamilton cycles.  
\medskip

\section{Finding a rainbow path forest}\label{sec:rainbow forest}

In this section we will construct a rainbow path forest $F$ covering $n - o(n)$ vertices of a properly coloured graph $G$; that is, a collection of vertex-disjoint paths in which each edge (across all paths) is assigned a distinct colour. To do so, we will adapt an approach of Montgomery, Pokrovskiy and Sudakov~\cite{MPS}, who proved that every proper colouring of the complete graph $K_n$ satisfying certain constraints contains a spanning rainbow forest. Moreover, their result holds for all `typical' graphs, which roughly speaking means graphs whose codegrees can be well controlled. However, their result cannot be applied directly to an arbitrary Dirac graph, or to an arbitrary graph $G$ with minimum degree $\delta(G) \geq (1/2 + \varepsilon)n$. We refine the proof of \cite[Lemma~8.17]{MPS}, showing that a near-spanning rainbow forest still exists in graphs without any constraints on the codegrees. The main aim of the section is to prove the following `rainbow forest lemma' for graphs $G$ satisfying $\delta(G)\geq (1/2+\varepsilon)n$. 

\begin{lemma}[Rainbow forest]\label{lem:key}
Let $0 < \varepsilon \leq 1/2$, and let $\alpha > 0$ be sufficiently small. There exists $n_0=n_0(\varepsilon,\alpha)$ such that the following holds for every $n \ge n_0$, and every graph $G$ on $n$ vertices with $\delta(G) \geq (1/2 + \varepsilon)n$.
In any $\big( n/8 + o(n) \big)$-bounded proper edge colouring of $G$, there exists a rainbow subgraph $F \subset G$ consisting of at most $n^{1-\alpha}$ vertex-disjoint paths, such that every path in $F$ has length at least $n^{\alpha}$ and $v(F) \geq n - 2n^{1-\alpha}$. 
\end{lemma}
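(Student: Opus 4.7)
The plan is to follow the switching-based approach of Montgomery, Pokrovskiy and Sudakov in their proof of \cite[Lemma 8.17]{MPS}, adapted from their typicality setting to our minimum-degree setting. The bridge that makes the adaptation plausible is that, by inclusion--exclusion, $\delta(G)\ge (1/2+\varepsilon)n$ forces every pair of vertices to share at least $2\varepsilon n$ common neighbours; codegrees are therefore linear in $n$, though not as tightly concentrated as ``typical''. Since the conclusion of \Cref{lem:key} only requires a \emph{near}-spanning rainbow forest---leaving up to $2n^{1-\alpha}$ vertices uncovered and allowing up to $n^{1-\alpha}$ components---this linear lower bound on codegrees should be enough to drive the augmentation argument with some polynomial slack.

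The heart of the proof is an augmentation lemma: given a current rainbow forest $F$ with $|V(F)| < n - 2n^{1-\alpha}$ or with some component of length $< n^\alpha$, one can always produce a new rainbow forest $F'$ that is strictly better in a suitable potential (covering more vertices, merging two short components, or lengthening a short path). The augmentations are of two flavours. A simple \emph{extension} picks an endpoint $v$ of a short path and a neighbour $w \notin V(F)$ such that the edge $vw$ has a colour unused in $F$; because the colouring is proper, each used colour contributes at most one forbidden edge at $v$, so for $|F| = o(n)$ many such $w$ exist from the degree budget alone. A \emph{merge} connects the endpoints $u,v$ of two short paths either by a direct rainbow edge or by a rainbow switching of length two through a common neighbour; the codegree bound $2\varepsilon n$ and the $(n/8+o(n))$-boundedness together guarantee a linear-sized supply of valid such merges at each pair of endpoints.

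I would then run this as a greedy procedure, at each step choosing a uniformly random legal augmentation, until neither type is available. A martingale / Azuma concentration argument along the greedy trajectory, in the spirit of \cite{MPS}, shows that with positive probability the process terminates at a rainbow forest with the required parameters, and the polynomial slack in the conclusion absorbs the $O(n^{1-\alpha})$ vertices and components on which the process gets stuck.

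The main obstacle is making the merge step go through without typicality of codegrees. In MPS, typicality ensures that for almost every pair of endpoints the number of valid switchings is essentially what one expects from a pseudorandom graph, which is what enables tight concentration and hence a \emph{spanning} conclusion. Here we have only a crude linear lower bound on the supply of merges, so one cannot hope to merge every pair of short paths: some polynomially small residue of unresolvable configurations is unavoidable, and this is exactly what forces the quantitative concession to $n-2n^{1-\alpha}$ covered vertices and $n^{1-\alpha}$ components. A secondary technical difficulty, arising from the $n/8$-boundedness (as opposed to the $o(n)$-boundedness used in \cite{CP}), is that a single used colour may forbid up to $n/8$ edges globally, so the bookkeeping of used colours must be played carefully against the degree budget; this is where the factor of $1/8$ first enters the calculation, and it should match the extremal example of \Cref{prop:necess}.
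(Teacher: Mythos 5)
The central counting claims in your augmentation scheme fail exactly in the regime that matters. You argue that an endpoint $v$ of a short path can be extended because ``each used colour contributes at most one forbidden edge at $v$, so for $|F|=o(n)$ many such $w$ exist from the degree budget alone'', and that merges through common neighbours have ``a linear-sized supply''. But \Cref{lem:key} requires $v(F)\ge n-2n^{1-\alpha}$, so in the late stages the forest has close to $n$ edges and hence uses close to $n$ distinct colours. At that point properness is vacuous as a tool: the used colours can block up to $\sim n$ edges at $v$, which exceeds $\deg(v)\ge(1/2+\varepsilon)n$, so \emph{every} edge at $v$ may carry a used colour; likewise both edges of a length-two merge through a common neighbour may be colour-blocked at every one of the $2\varepsilon n$ common neighbours. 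The $n/8$-boundedness gives no help here (it bounds how often one colour appears globally, not how many distinct used colours touch a vertex), and the codegree bound addresses vertex availability, not colour exhaustion. So the greedy/switching process as described can only be justified while $v(F)=o(n)$ --- which is what your own parenthetical concedes --- and there is no argument for pushing it to $n-2n^{1-\alpha}$; the Azuma/potential-function step that is supposed to do this is asserted, not proved. A related unaddressed point is the structural requirement that every path of $F$ have length at least $n^{\alpha}$ with at most $n^{1-\alpha}$ components, which your process does not control.

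The paper takes a different, and essentially colour-exhaustion-free, route: it first passes to an $r$-regular spanning subgraph with $r\approx\delta(G)/2$ (\Cref{lem:CKO}), randomly splits the vertices into $m\approx n^{\alpha}$ blocks and the colours into $m$ classes (\Cref{lem:pickvtxs,lem:pickcors}), finds a near-perfect rainbow matching in each block-pair using only its own colour class via the coloured nibble (\Cref{lem:MPS}), and concatenates the matchings into paths of length $m$ (\Cref{lem:mat->F}); distinctness of colours across different matchings is automatic because the colour classes are disjoint. This also shows that your diagnosis of where $1/8$ enters is off: it is not ``bookkeeping of used colours against the degree budget'' but the hypothesis of \Cref{lem:MPS} that the colouring be roughly $(\delta n/2)$-bounded, applied with degree density $\delta\approx 1/4+\varepsilon/2$ after regularization, i.e.\ $1/8=\tfrac12\cdot\tfrac12\cdot\tfrac12$. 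If you want to salvage an augmentation/switching proof you would need a mechanism (such as the paper's colour partitioning, or genuine colour-switching moves with new counting) that prevents the set of used colours from saturating the neighbourhoods of the endpoints; as written, the proposal has a genuine gap at its main step.
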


Roughly speaking, we will prove \Cref{lem:key} in four steps. First, by a result of Christofides, Kühn and Osthus, we can find an $r$-regular spanning subgraph $G' \subset G$ with $r \approx \delta(G) /2$ (see \Cref{lem:CKO}). Next we randomly partition $V(G')$ into $m = n^\alpha + 1$ equal-sized sets $V_0,\ldots,V_m$ and the colour set $\cC$ into $m$ sets $\cC_1,\ldots,\cC_m$. We will prove that, with high probability, for every adjacent pair $(V_{i-1},V_i)$, the subgraph $B_i$ of $G'[V_{i-1},V_i]$ formed by the edges assigned colours in $\cC_i$ has a certain `near-regularity' property. We will then apply a lemma of Montgomery, Pokrovskiy and Sudakov (see Lemma~\ref{lem:MPS}) to $B_i$, to find a near-perfect rainbow matching in $B_i$ for each $i \in [m]$. Finally, we will greedily construct rainbow paths using these rainbow matchings, to form a large rainbow forest. 

In order to prepare for the formal proof of \Cref{lem:key}, we will need to present the various lemmas mentioned in the outline above. First, in order to find a regular spanning subgraph of $G$ we will use the following lemma, which is a special case of a result of Christofides, Kühn and Osthus~\cite[Theorem 12]{CKO}.

\begin{lemma}\label{lem:CKO}
If\/ $\delta(G) > n/2$, then $G$ has an $r$-regular spanning subgraph $G'\subset G$, where 
$$\lceil\delta(G)/2\rceil\leq r\leq \lceil\delta(G)/2\rceil+1$$
is an even integer.
\end{lemma}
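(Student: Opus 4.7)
The lemma is cited as a special case of a theorem of Christofides, K\"uhn and Osthus, so the cleanest route is to invoke their theorem as a black box. For a self-contained argument, my plan would be to apply Tutte's $f$-factor theorem with $f(v) \equiv r$, where $r$ is the unique even integer in $\{\lceil \delta(G)/2 \rceil, \lceil \delta(G)/2 \rceil + 1\}$ (since these are two consecutive integers, exactly one of them is even). Because $r$ is even, the global parity requirement $rn \equiv 0 \pmod 2$ is automatic, and it remains only to verify Tutte's condition: for every pair of disjoint subsets $S, T \subseteq V(G)$,
\[
r|S| \;-\; r|T| \;+\; \sum_{v \in T} d_{G-S}(v) \;\geq\; q(S, T),
\]
where $q(S,T)$ denotes the number of connected components $H$ of $G - (S \cup T)$ for which $e_G(V(H), T)$ is odd (noting that $r|V(H)|$ is even, since $r$ is even).

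To verify this, I would use $\delta(G) > n/2$ in two essential ways. First, each $v \in T$ satisfies $d_{G-S}(v) \geq \delta(G) - |S|$, so $\sum_{v \in T} d_{G-S}(v) \geq (\delta(G) - |S|)|T|$, which, combined with $r \leq \lceil \delta(G)/2 \rceil + 1$, dominates the $-r|T|$ term by a margin of order $(\delta(G)/2 - |S|)|T|$. Second, every vertex $v \notin S \cup T$ has at least $\delta(G) - |S| - |T|$ neighbours inside $V \setminus (S \cup T)$, so whenever $|S| + |T| < \delta(G)$ each component of $G - (S \cup T)$ must contain at least $\delta(G) - |S| - |T| + 1$ vertices, giving $q(S,T) \leq n/(\delta(G) - |S| - |T| + 1)$.

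The potentially delicate regime is when $|S| + |T|$ is close to $\delta(G)$, since the bound on $q(S,T)$ degrades there; but in this range $|S|$ itself is already of order $n/2$, so $r|S|$ is of order $n^2/8$, which dwarfs $q(S,T) \leq n$. In every other regime, either $r|S|$ or $\sum_{v \in T} d_{G-S}(v)$ carries substantial slack over the worst case. The main obstacle is to organise the case analysis cleanly so that the bounds above are applied in the correct regime, in particular ruling out the situation where $|S|$, $|T|$, and the components of $G-(S\cup T)$ all balance adversarially at scale $\Theta(n)$; the calibration $r \leq \lceil \delta(G)/2 \rceil + 1$ is precisely what keeps the inequality positive there. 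Once each case is checked, Tutte's theorem directly produces the desired $r$-regular spanning subgraph.
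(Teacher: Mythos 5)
Your first route --- invoking the Christofides--K\"uhn--Osthus theorem as a black box --- is exactly what the paper does: its entire proof of this lemma is the observation that it follows from part~$(i)$ of \cite[Theorem~12]{CKO} applied with $\ell = 1$. On that level your proposal is correct and identical in approach.

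Your self-contained sketch via Tutte's $f$-factor theorem is a genuinely different route, and the lemma can indeed be proved this way, but as written it has a real gap: the case analysis you explicitly defer \emph{is} the content of the argument, and the one concrete claim you make about the delicate regime is incorrect. If $|S|+|T|$ is close to $\delta(G)\approx n/2$, it does not follow that $|S|$ is of order $n/2$ --- take $S=\emptyset$ and $|T|\approx n/2$; in that case the inequality is rescued by $\sum_{v\in T} d_{G-S}(v) - r|T| \geq (\delta(G)-r)|T| \approx n^2/8$, not by $r|S|$. Moreover, the genuinely tight regime is not $|S|+|T|\approx\delta(G)$ but $|S|\approx|T|\approx n/2$, i.e.\ $S\cup T$ almost all of $V(G)$: there your lower bound $r|S| + \big(\delta(G)-|S|-r\big)|T|$ degenerates, its minimum over such pairs being $r\big(\delta(G)-r\big) - \big(n-\delta(G)\big)^2/4$, which is positive only because $\delta(G) > n/2$ holds \emph{strictly} and $r\approx\delta(G)/2$; this is the computation your sketch must make explicit (note that $q(S,T)=0$ when $S\cup T=V(G)$, that $q(S,T)\le n-|S|-|T|$ in general, and that $q(S,T)=0$ when $T=\emptyset$ precisely because $r$ is even). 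With that regime handled, the remaining cases do follow from the bounds you state, so the Tutte route is salvageable --- it buys a self-contained and elementary proof --- but for the paper's purposes the one-line citation is both simpler and sufficient.
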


\begin{proof}
This follows from part~$(i)$ of~\cite[Theorem 12]{CKO}, applied with $\ell = 1$. 
\end{proof}

Montgomery, Pokrovskiy and Sudakov \cite{MPS} introduced a coloured version of R\"{o}dl's nibble to construct near-perfect rainbow matchings in a `near-regular' bipartite graph. To apply their method, we need the following definition. 

\begin{definition}\label{reg}
For $\gamma,\delta>0$, a graph $H$ on $n$ vertices is $(\gamma,\delta,n)$-near-regular if
$$(1-\gamma)\delta n \le d_H(v) \le (1+\gamma)\delta n$$
for every $v\in V(H)$.
\end{definition}

As mentioned above, we will randomly partition the vertex set $V(G')$ of the regular graph $G'$ into parts $V_i$ of size $n^{1-\alpha}$. We will use the following lemma from \cite{MPS} to show that, with high probability, each of the induced subgraphs $G'[V_i,V_{i+1}]$ is near-regular. 

\begin{lemma}[Lemma~5.2 of~\cite{MPS}]\label{lem:pickvtxs}
Let $\alpha > 0$ be sufficiently small, let $n \in \N$, and let $n^{-\alpha} \le \gamma,\delta,p,\mu\leq1$ with $p < 1/2$.  Let $H$ be a $(\gamma,\delta,n)$-near-regular graph with a proper $\mu n$-bounded edge-colouring, and let $A,B\subset V(H)$ be disjoint subsets of order $pn$, chosen uniformly at random from all such pairs of subsets. Then $H[A,B]$ is a  $(2\gamma,\delta,2pn)$-near-regular bipartite graph with a $\big( (1+\gamma)2\mu p^2 n \big)$-bounded colouring with probability $1-o(n^{-1})$.
\end{lemma}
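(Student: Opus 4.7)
The plan is to prove the two conclusions of the lemma—the near-regularity of $H[A,B]$ and the boundedness of its induced colouring—independently, each by a Chernoff-type concentration inequality followed by a union bound.

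For the degree condition, fix a vertex $v \in V(H)$ and condition on the event $v \in A$; the case $v \in B$ is symmetric, and nothing needs to be verified when $v \notin A \cup B$. Under this conditioning, each $u \ne v$ belongs to $B$ with probability $\tfrac{pn}{n-1}$, and the random variable $d_{H[A,B]}(v) = |N_H(v) \cap B|$ is hypergeometrically distributed with mean
\[
\E\big[d_{H[A,B]}(v)\,\bigm|\,v \in A\big] \;=\; d_H(v)\cdot\tfrac{pn}{n-1}.
\]
By the $(\gamma,\delta,n)$-near-regularity of $H$, this mean lies within a factor $1\pm\gamma$ of the target $p\delta n$ (up to a $1+o(1)$ error). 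The hypergeometric Chernoff bound then shows that $d_{H[A,B]}(v)$ further deviates from its mean by a multiplicative factor $1\pm\gamma$ with probability at most $\exp\big(-\Omega(\gamma^2 p\delta n)\big) \le \exp\big(-\Omega(n^{1-4\alpha})\big)$, using $\gamma,\delta,p \ge n^{-\alpha}$. For $\alpha$ sufficiently small this is $o(n^{-2})$, and a union bound over the $n$ vertices combines the two factors of $1\pm\gamma$ into $1\pm 2\gamma$, yielding the desired near-regularity with failure probability $o(n^{-1})$.

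For the colour condition, fix a colour $c$ and let $M_c$ be its colour class. Since the colouring is proper, $M_c$ is a matching; since it is $\mu n$-bounded, $|M_c| \le \mu n$. Let $X_c$ count the edges of $M_c$ having one endpoint in $A$ and the other in $B$. For each $e = \{u,w\} \in M_c$,
\[
\Pr\big[e \in E(H[A,B])\big] \;=\; 2\cdot\tfrac{pn}{n}\cdot\tfrac{pn}{n-1} \;=\; 2p^2(1+o(1)),
\]
so $\E[X_c] \le 2\mu p^2 n\cdot(1+o(1))$. Because $M_c$ is a matching its edges involve pairwise disjoint vertex pairs, so exposing the $A$/$B$/rest assignment one vertex at a time produces a martingale with bounded increments, and a bounded-differences concentration gives
\[
\Pr\big[X_c > (1+\gamma)\cdot 2\mu p^2 n\big] \;\le\; \exp\big(-\Omega(\gamma^2 \mu p^2 n)\big) \;\le\; \exp\big(-\Omega(n^{1-5\alpha})\big).
\]
A union bound over the at most $\binom{n}{2}$ colour classes contributes a further $o(n^{-1})$ to the failure probability.

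The only genuine subtlety is the dependence created by $A$ and $B$ being drawn jointly as disjoint sets, which breaks strict independence of the natural indicator variables. This is handled by conditioning on one side (for the degree argument) and by a vertex-exposure martingale (for the colour-class argument). After this, the entire proof reduces to the routine check that the Chernoff exponents $\gamma^2 p\delta n$ and $\gamma^2 \mu p^2 n$ both dominate $\log n$, which they do whenever $\alpha$ is chosen small enough, thanks to the hypothesis $\gamma,\delta,p,\mu \ge n^{-\alpha}$.
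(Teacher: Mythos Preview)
The paper does not give its own proof of this lemma; it is quoted as Lemma~5.2 of Montgomery--Pokrovskiy--Sudakov and used as a black box. Your outline is precisely the natural (and, one expects, the original) argument: for degrees, condition on $v\in A$ so that $|N_H(v)\cap B|$ is hypergeometric with mean $d_H(v)\cdot pn/(n-1)$, apply the hypergeometric Chernoff bound, and union-bound over vertices; for colour classes, use that each class is a matching so that the number of crossing edges concentrates, and union-bound over colours.

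Two small points are worth tightening. First, the vertex-exposure martingale you invoke for $X_c$ has $n$ steps with Lipschitz constant~$1$, so Azuma gives an exponent of order $t^2/n$ with $t=\gamma\cdot 2\mu p^2 n$, i.e.\ $\Omega(\gamma^2\mu^2 p^4 n)$ rather than the $\Omega(\gamma^2\mu p^2 n)$ you wrote; the latter is what one obtains in the \emph{independent} assignment model (each vertex placed in $A$, $B$, or neither independently with probabilities $p,p,1-2p$), followed by a standard coupling to the fixed-size model. Either exponent is $n^{1-O(\alpha)}$ and suffices once $\alpha$ is small. Second, your computed target degree is $(1\pm 2\gamma)p\delta n$, whereas the stated conclusion ``$(2\gamma,\delta,2pn)$-near-regular'' formally requires degrees $(1\pm 2\gamma)\cdot 2p\delta n$; this factor-of-two mismatch is a transcription artefact in the lemma statement (the density parameter for the bipartite graph on $2pn$ vertices should be $\delta/2$, consistent with how the lemma is applied downstream), not an error in your reasoning.
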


We aim to find a near-perfect rainbow matching in $G'[V_{i-1},V_{i}]$ for each $i \in [m]$, ensuring moreover that each edge (across all these matchings) has a distinct colour. To achieve this, we randomly partition the colour set $\cC$ into $m$ parts $\cC_i$, and focus on the subgraph $B_i \subset G'[V_{i-1}, V_{i}]$ consisting of the edges assigned a colour in $\cC_i$. The following lemma from \cite{MPS} guarantees that there exists a partition such that each $B_i$ is near-regular. 

\begin{lemma}[Lemma~5.3 of~\cite{MPS}]\label{lem:pickcors}
Let $\alpha > 0$ be sufficiently small, let $n \in \N$, and let $n^{-\alpha} \le \gamma,\delta,p,\mu \leq 1$. Let $H$ be a balanced bipartite $(\gamma,\delta,n)$-near-regular graph with a proper $\mu n$-bounded edge-colouring, and let $H_1\subset H$ be the random subgraph formed by the edges whose colour lies in $S$, where each colour is included in $S$ independently with probability $p$. Then $H_1$ is $(2\gamma,p\delta,n)$-near-regular with probability $1-o(n^{-1})$.
\end{lemma}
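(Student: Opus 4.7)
The plan is to fix a vertex $v$ and apply a Chernoff bound to control the degree $d_{H_1}(v)$, then take a union bound over the $2n$ vertices of $H$. The key observation that makes Chernoff directly applicable is that since the colouring of $H$ is proper, the (at most $d_H(v)$) colours appearing on edges incident to $v$ are pairwise distinct, so the indicators $\{\id_{c\in S} : c \text{ is used at } v\}$ are mutually independent Bernoulli$(p)$ variables. Consequently
\[
d_{H_1}(v) \;=\; \sum_{c \text{ used at } v} \id_{c \in S}
\]
is a binomial random variable with mean $p\,d_H(v)$, where $d_H(v) \in [(1-\gamma)\delta n, (1+\gamma)\delta n]$ by near-regularity of $H$.

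Next I would use the multiplicative Chernoff bound. Since $p\,d_H(v) \in [(1-\gamma)p\delta n,(1+\gamma)p\delta n]$, a deviation of at most $\tfrac{\gamma}{2}\cdot p\,d_H(v) \leq \gamma p \delta n$ from the mean will put $d_{H_1}(v)$ inside $[(1-2\gamma)p\delta n,(1+2\gamma)p\delta n]$, as required by the definition of $(2\gamma,p\delta,n)$-near-regularity. The Chernoff bound then gives
\[
\Pr\!\left[\,|d_{H_1}(v)-p\,d_H(v)| > \tfrac{\gamma}{2}\, p\,d_H(v)\right] \;\le\; 2\exp\!\left(-\,\tfrac{\gamma^2 p\, d_H(v)}{12}\right) \;\le\; 2\exp\!\left(-\,\tfrac{\gamma^2 p\, \delta n}{24}\right),
\]
using $d_H(v)\ge (1-\gamma)\delta n \ge \delta n/2$ for the second inequality.

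Finally, since $\gamma,p,\delta\ge n^{-\alpha}$ by hypothesis, the exponent satisfies $\gamma^2 p\delta n / 24 \ge n^{1-4\alpha}/24$, which for $\alpha$ sufficiently small is $\omega(\log n)$; hence the above probability is $o(n^{-2})$. Taking a union bound over the $2n$ vertices of $H$ yields that $H_1$ is $(2\gamma,p\delta,n)$-near-regular with probability $1-o(n^{-1})$, as required.

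There is no genuine obstacle here; the argument is a standard one-shot concentration plus union bound. The only subtle point to be careful about is that one must use the properness of the colouring to obtain mutual independence of the Bernoulli variables counting $d_{H_1}(v)$ (indeed, any two edges at $v$ receive distinct colours, hence distinct independent coin flips), and one must verify that the chosen deviation of $\tfrac{\gamma}{2}\, p\, d_H(v)$ is small enough to absorb the slack between $d_H(v)$ and $\delta n$ yet still yields a bound well within the $(1\pm 2\gamma)p\delta n$ window.
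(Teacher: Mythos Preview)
Your argument is correct and is the standard Chernoff-plus-union-bound proof; note that the paper does not give its own proof of this lemma but simply cites it as Lemma~5.3 of~\cite{MPS}. One inconsequential slip: $H$ has $n$ vertices in total (not $2n$), and the bound $d_H(v)\ge \delta n/2$ tacitly uses $\gamma\le 1/2$, but neither point affects the conclusion.
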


We will use the following lemma from \cite{MPS} to find a rainbow near-perfect matching in the graph $B_i$ for each $i \in [m]$.


\begin{lemma}[Lemma~8.8 of~\cite{MPS}]\label{lem:MPS}
Let $\alpha > 0$ be sufficiently small, let $n \in \N$ be sufficiently large, and let $n^{-2\sqrt{\alpha}} \le \gamma \leq n^{-\sqrt{\alpha}}$ and $n^{-2\alpha} \le \delta, q \leq 1$. Let $H$ be a balanced bipartite $(\gamma,\delta,n)$-near-regular graph on $n$ vertices with a proper $(1-q)\delta n/2$-bounded edge-colouring. Then there exists a rainbow matching $M \subset H$ of size at least
$$e(M) \geq (1-q)n/2.$$
\end{lemma}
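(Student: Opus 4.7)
My plan is to use the R\"odl nibble method, which is the standard tool for producing near-perfect rainbow matchings in near-regular properly coloured bipartite graphs. I would build the matching $M$ iteratively over $T = O(\xi^{-1}\log(1/q))$ rounds for some small constant $\xi > 0$; in each round a random `nibble' of edges is added to the partial matching, and conflicts are then pruned.

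At the start of round $t$, I would maintain a partial rainbow matching $M_t$, the set $U_t$ of vertices uncovered by $M_t$, the set $\cC_t$ of colours unused by $M_t$, and the `live' subgraph $H_t \subset H[U_t]$ formed by the edges whose colour lies in $\cC_t$. The key inductive invariants would be that $H_t$ is $(\gamma_t, \delta_t, |U_t|)$-near-regular and that every colour in $\cC_t$ appears on at most $(1+\gamma_t)(1-q)\delta_t|U_t|/2$ edges of $H_t$, where $\delta_t := \delta(1-\xi)^{2t}$ is the predicted edge-density parameter and $\gamma_t$ is a slowly growing near-regularity parameter with $\gamma_0 = \gamma$.

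In round $t$ I would include each edge of $H_t$ in a random set $E_t$ independently with probability $p_t = \xi/(\delta_t|U_t|)$, and then keep only those edges of $E_t$ that share no endpoint and no colour with any other edge of $E_t$. The survivors form a partial rainbow matching $M_t'$ that is added to $M_t$. A first-moment calculation using near-regularity of $H_t$ and the colour bound shows that each edge survives with $\Theta(1)$ probability, giving $|M_t'| = \Theta(\xi|U_t|)$ in expectation. The core technical step is concentration: for each $v \in U_{t+1}$ and each $c \in \cC_{t+1}$, I would show that $d_{H_{t+1}}(v) = (1 \pm o(1))(1-\xi)^2 d_{H_t}(v)$ and that the number of edges of colour $c$ in $H_{t+1}$ shrinks by the same factor. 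Both bounds follow from Talagrand's inequality applied to the product probability space defining $E_t$, using the fact that changing one edge in $E_t$ alters each such quantity by $O(1)$ after pruning.

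After $T$ rounds, $|U_T| \le qn$ and so $|M_T| = (n - |U_T|)/2 \ge (1-q)n/2$, as desired. The main obstacle is the simultaneous tracking of both vertex degrees and colour-class sizes through all $T = O(\log(1/q))$ rounds: the multiplicative concentration errors of size $1 \pm o(1)$ accumulate geometrically, and the resulting near-regularity parameter $\gamma_T$ must stay below the threshold at which the concentration bounds fail. The hypothesis $\gamma \le n^{-\sqrt{\alpha}}$, with its floor $n^{-2\sqrt{\alpha}}$, provides exactly the polynomial slack needed to absorb these errors, while the lower bounds $\delta, q \ge n^{-2\alpha}$ ensure that $H_t$ retains polynomially many edges per vertex throughout the process, so that Talagrand's inequality remains effective at every step.
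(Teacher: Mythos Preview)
The paper does not give its own proof of this lemma: it is quoted verbatim as Lemma~8.8 of~\cite{MPS} and used as a black box. Your sketch is the coloured R\"odl nibble, which is precisely the method Montgomery, Pokrovskiy and Sudakov employ in~\cite{MPS} to prove the result, so your approach is correct and matches the original source.
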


We will construct a rainbow path forest $F$ greedily using these rainbow matchings. 

\begin{lemma}\label{lem:mat->F}
Let $m,n,t \in \N$, let $G$ be a graph with $(m+1)n$ vertices, and let $V(G) = V_0 \cup \cdots \cup V_m$ be a partition with $|V_0| = \cdots = |V_m| = n$. Suppose that 
 $G[V_{i-1}, V_{i}]$ contains a matching $M_i$ with $e(M_i) \geq n - t$ for each $i \in [m]$. Then there exists $H \subset G$ consisting of $n - tm$ vertex-disjoint paths of length $m$. 
\end{lemma}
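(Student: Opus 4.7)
The plan is to view each matching $M_i$ as a partial injection $f_i \colon V_{i-1} \to V_i$, and then track how many vertices of $V_0$ survive when we compose these partial injections all the way to $V_m$. Each surviving vertex will be the starting point of a path of length $m$ that visits one vertex in every $V_i$; these paths are automatically vertex-disjoint because each $f_i$ is injective on its domain.

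More concretely, I would define $A_0 := V_0$ and, for $i \in [m]$, set
\[
A_i := \bigl\{\, f_i(v) : v \in A_{i-1} \cap V(M_i) \,\bigr\} \subseteq V_i,
\]
where $V(M_i) \cap V_{i-1}$ is the set of $V_{i-1}$-endpoints covered by $M_i$. Since $e(M_i) \ge n - t$, at most $t$ vertices of $V_{i-1}$ fail to lie in $V(M_i)$, and $f_i$ is a bijection from $A_{i-1} \cap V(M_i)$ onto its image. Therefore
\[
|A_i| \ge |A_{i-1}| - t,
\]
and by induction $|A_m| \ge n - tm$.

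To extract the forest, for each $u \in A_m$ I would trace backwards through $f_m^{-1}, f_{m-1}^{-1}, \dots, f_1^{-1}$, which are well-defined on $A_m, A_{m-1}, \dots, A_1$ respectively, to obtain a unique path $v_0 v_1 \cdots v_m$ with $v_i \in V_i$, $v_m = u$, and each edge $v_{i-1} v_i \in M_i$. Different choices of $u \in A_m$ give internally disjoint paths, because the matchings $M_i$ are matchings: two such paths sharing a vertex in some $V_i$ would have to agree from that point onwards (by the matching property of $M_{i+1}, M_{i+2}, \ldots$) and from that point backwards as well, hence be identical. Collecting these $|A_m| \ge n - tm$ paths gives the desired subgraph $H$.

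There is essentially no obstacle here; the only thing to be careful about is the bookkeeping that $|A_i|$ loses at most $t$ elements per step, and that the injectivity of each $M_i$ ensures both that $|A_i| = |A_{i-1} \cap V(M_i)|$ (no double-counting when we apply $f_i$) and that the resulting paths are pairwise vertex-disjoint.
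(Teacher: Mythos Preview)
Your proof is correct and is essentially the same as the paper's, just phrased in the language of partial injections rather than greedily extending a path forest one layer at a time. The sets $A_i$ you define are precisely the $V_i$-endpoints of the paths in the paper's intermediate forest $H_i$, and your bound $|A_i| \ge |A_{i-1}| - t$ is exactly the paper's observation that at most $t$ paths fail to extend through $M_{i+1}$.
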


\begin{proof}
Let $0 \leq k \leq m-1$, and suppose we have already built a path forest $H_k \subset G\big[ V_0 \cup \cdots \cup V_k \big]$ consisting of $n - tk$ vertex-disjoint paths of length $k$. 
At most $t$ of the paths in $H_k$ cannot be extended by adding an edge of $M_{k+1}$, and we therefore obtain a path forest $H_{k+1} \subset G\big[ V_0 \cup \cdots \cup V_{k+1} \big]$ consisting of $n - t(k+1)$ vertex-disjoint paths of length $k+1$. Since this holds for every $0 \leq k \leq m-1$, we obtain a path forest $H = H_m$ as required.
\end{proof}


We are finally ready to put the pieces together and prove \Cref{lem:key}. 

\begin{proof}[Proof of \Cref{lem:key}]
Fix $\alpha > 0$ sufficiently small and $0 < \varepsilon \leq 1/2$, and let $n \in \N$ be sufficiently large. Suppose that we are given a graph $G$ on $n$ vertices with $\delta(G) \geq (1/2 + \varepsilon)n$, and a $n/8$-bounded proper edge colouring of $G$. By \Cref{lem:CKO}, there exists an $r$-regular spanning subgraph $G'\subset G$ with $r \geq \delta(G)/2$. Note that $G'$ is $(\gamma,\delta,n)$-near-regular for $\gamma = n^{-\sqrt{\alpha}}$ and $\delta = r/n$, and the colouring of $G'$ is proper and $n/8$-bounded, since $G' \subset G$.

Now, set $m = \lfloor n^\alpha \rfloor - 1$, and let $V(G') = U \cup V_0 \cup \cdots \cup V_m$ be a random partition of the vertices of $G'$ with
$$|U| < |V_0| = \cdots = |V_m| = n' := \bigg\lfloor \frac{n}{m+1} \bigg\rfloor.$$
By \Cref{lem:pickvtxs} and the union bound, with high probability the coloured graph $G[V_{i-1},V_i]$ has the following two properties for each $i \in [m]$: it is $(2\gamma,\delta,2n')$-near-regular, and the colouring is $(1 + \gamma) n'/4m$-bounded. 

We next randomly choose a partition $\cC = \cC_1 \cup \cdots \cup \cC_m$ of the set of colours into $m$ parts, by placing each colour independently and uniformly into one of the parts. For each $i \in [m]$, let $B_i$ be the subgraph of $G'[V_i,V_{i+1}]$ formed by the edges whose colour is placed in $\cC_i$. Applying \Cref{lem:pickcors} with $H = G'[V_i,V_{i+1}]$, $p = 1/m$ and $S = \cC_i$, and taking a union bound over $i \in [m]$, it follows that with high probability every $B_i$ is $(4\gamma,\delta/m,2n')$-near-regular. Moreover, the edge-colouring of $B_i$ is proper and $(1 - \gamma)\delta n' / m$-bounded, 
since $B_i \subset G'[V_i,V_{i+1}]$ and $\delta = r/n \ge 1/4 + \eps/2$.

Let's fix partitions of $V(G')$ and $\cC$ such that $B_i$ satisfies these conditions. By \Cref{lem:MPS}, applied with $q = n^{-2\alpha}$, it follows that there exists a rainbow matching $M_i \subset B_i$ such that
\begin{align*}
e(M_i) \geq (1-q)n'.    
\end{align*}
Finally, by \Cref{lem:mat->F}, it follows that there exists a rainbow path forest $F\subset G'$ consisting of $|V_i| - qmn'$ vertex-disjoint paths of length $m$. Thus 
\begin{align*}
v(F) \geq n - n^{1 - \alpha} + qm n \geq n - 2n^{1 - \alpha}, 
\end{align*}
as required, since $q \le n^{-2\alpha}$.
\end{proof}

\begin{remark}
In the proof of \Cref{lem:key}, we actually only needed the edge colouring of $G$ to be proper and $\big( \big(1/4 - o(1) \big)\,\delta(G) \big)$-bounded.
\end{remark} 

\smallskip

\section{Finding an absorber and a reservoir}\label{sec:absorber}

In this section, we will construct an absorber and a reservoir in a graph $G$ with minimum degree $\delta(G) \geq (1/2 + \varepsilon)n$. We begin with the following lemma, which provides our absorber; both the statement and proof are based on~\cite[Lemma 2.3]{RRS}.

\begin{lemma}[Absorber]\label{lem:absorber}
For $0 < \varepsilon < 1/2$ and $0 < c < 2^{-10}\varepsilon$, there exists $C = C(c,\varepsilon)$ and $n_0 = n_0(c,\varepsilon)$ such that the following holds for every $n \geq n_0$ and $(\log n)^2 \leq m \leq n/C$. If $G$ is a graph on $n$ vertices with $\delta(G) \geq (1/2 + \varepsilon)n$, then 
there exists a path $A\subset G$ with
$$v(A)\leq Cm,$$
such that for every set $U\subset V(G)\setminus V(A)$ with size $|U|\leq cm$, there is a path $A_{U}\subset G$ which has the same endpoints as $A$ and vertex set $V(A_U) = V(A) \cup U$.
\end{lemma}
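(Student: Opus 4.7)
My plan is to follow the absorbing-path method of R\"odl, Ruci\'nski and Szemer\'edi. Call a $4$-vertex path $xyzw$ in $G$ an \emph{absorber for $v$} if $vy, vz \in E(G)$; the key observation is that whenever $xyzw$ sits as a subpath of some path $P$, we may reroute $P$ by replacing this subpath with $xyvzw$, producing a path with the same endpoints as $P$ and vertex set $V(P)\cup\{v\}$. So the strategy will be to build $A$ as a concatenation of many pairwise vertex-disjoint $4$-subpaths (joined by short bridges) such that every vertex $v \in V(G)$ has many absorbers among these subpaths, and then absorb any small set $U$ by finding a system of distinct representatives between $U$ and the absorbing $4$-paths.

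The first step is a counting estimate. Since $\delta(G) \ge (1/2+\eps)n$, any two vertices share $\ge 2\eps n$ common neighbours, so the induced graph $G[N(v)]$ has minimum degree $\ge 2\eps n$ for every $v$, and in particular contains $\Omega(\eps n^{2})$ edges. Each such edge $yz \subseteq N(v)$ extends to $\Omega(n^{2})$ $4$-paths $xyzw$ (any $x \in N(y)\setminus\{v,z\}$ and $w\in N(z)\setminus\{v,y,x\}$ will do). Hence the family $\cA(v)$ of absorbers for $v$ satisfies $|\cA(v)| \ge c_{0}\eps n^{4}$ for an absolute constant $c_{0}$.

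Next I would apply the probabilistic method. Set $p = Am/n^{4}$ for a large constant $A = A(c,\eps)$, and let $\cS$ be the random family obtained by including each $4$-path of $G$ independently with probability $p$. Then $\E|\cS| = \Theta(m)$, for each $v$ we have $\E|\cA(v) \cap \cS| \ge c_{0}\eps A m$, and the expected number of pairs in $\cS$ sharing a vertex is $O(m^{2}/n) = O(m/C)$, which is $\ll m$ once $C$ is large. Chernoff's inequality, combined with the hypothesis $m \ge (\log n)^{2}$, shows each of these estimates holds to within a constant factor with probability $1-o(n^{-1})$; a union bound over the $n$ vertices then produces, with positive probability, a family $\cS$ of size $O(m)$ with at most $m/100$ intersecting pairs and $|\cA(v) \cap \cS| \ge c_{1}m$ for every $v$, where $c_{1} = c_{1}(\eps) > 0$. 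Removing one $4$-path from each intersecting pair then yields a pairwise vertex-disjoint family $\cS'$ of size $O(m)$ with $|\cA(v) \cap \cS'| \ge c_{1}m/2$ for all $v$.

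To link $\cS'$ into a single path $A$, I would order its members arbitrarily and connect each consecutive pair by an unused common neighbour of their adjacent endpoints; since any two vertices share $\ge 2\eps n$ common neighbours and only $O(m) = o(n)$ vertices are in use (thanks to $m \le n/C$ with $C$ large), a fresh connector always exists, so we obtain a path $A$ of length at most $Cm$ containing every member of $\cS'$ as a consecutive $4$-subpath. Given $U \subseteq V(G)\setminus V(A)$ with $|U| \le cm$, I assign the vertices of $U$ to absorbers in $\cS'$ greedily: each $v \in U$ has $\ge c_{1}m/2$ absorbers available in $\cS'$, of which at most $|U|-1 < cm$ have been taken, so provided $c < c_{1}/2$ an unused absorber always remains, and the corresponding reroutings yield $A_{U}$. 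The main obstacle will be the probabilistic step, where we must simultaneously control the size of $\cS$, the absorber count at each of the $n$ vertices, and the intersecting-pair count, with enough slack that the cleanup preserves the per-vertex lower bound; the hypothesis $m \ge (\log n)^{2}$ is used precisely to make the Chernoff bounds strong enough to support a union bound over $v$, while the connecting and absorbing steps are routine once we have $\cS'$ in hand and $c < c_{1}(\eps)/2$.
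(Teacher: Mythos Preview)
Your proposal is correct and follows essentially the same R\"odl--Ruci\'nski--Szemer\'edi absorbing strategy as the paper; the only difference is cosmetic, in that the paper takes single edges $yz \in G[N(v)]$ as the absorbing gadgets (sampling each edge of $K_n$ with probability $\Theta(m/n^{2})$, cleaning up non-isolated edges to obtain a matching $M$ with $e(M\cap G[N(v)])>cm$ for every $v$) rather than $4$-paths, and then links the matching edges into a path via connector vertices exactly as you do. One small correction: the count of vertex-intersecting pairs in $\cS$ is a degree-$2$ polynomial in the selection indicators, so Chernoff does not apply to it directly; the paper handles the analogous step with Markov's inequality, which only gives a constant success probability, but this still suffices since the per-vertex Chernoff bounds (using $m\ge(\log n)^{2}$) already survive the union bound over $n$ vertices and leave room for one more event of constant failure probability.
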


The idea of the proof is to find a small matching $M \subset G$ which nevertheless contains \emph{many} edges of $G[N(v)]$ for \emph{every} vertex $v \in V(G)$. The absorber $A$ will be a path formed by adding paths of length two between consecutive pairs of edges of $M$. The matching $M$ is given by the following lemma.

\begin{lemma}\label{lem:cF}
For $0<\varepsilon<1/2$ and $0<c<2^{-10}\varepsilon$, there exists $C = C(c,\varepsilon)$ and $n_0 = n_0(c,\varepsilon)$ such that the following holds for every $n \geq n_0$ and $(\log n)^2 \leq m \leq \eps n$. If $G$ is a graph on $n$ vertices with $\delta(G) \geq (1/2 + \varepsilon)n$, then there exists a matching $M\subset G$ with $e(M)\leq Cm$ such that
$$e\big(M\cap G[N(v)]\big)>cm$$
holds for every vertex $v\in V(G)$.
\end{lemma}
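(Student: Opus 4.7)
The plan is to construct $M$ by a straightforward sample-and-delete procedure: include each edge of $G$ independently with a suitably chosen probability $q$, and then remove every edge of the resulting random set that happens to share a vertex with another sampled edge. The heart of the argument will then be a pair of concentration estimates showing that, for every vertex $v$, many edges inside $G[N(v)]$ survive, while the total number of edges discarded is small.

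The key structural input I will use is that, for every $v \in V(G)$, we have $e(G[N(v)]) \geq \eps n^2/2$. This is immediate from inclusion--exclusion: every $u \in N(v)$ satisfies $|N(u) \cap N(v)| \geq 2\delta(G) - n \geq 2\eps n$, so $u$ has at least $2\eps n - 1$ neighbours inside $N(v) \setminus \{v\}$; summing over $u \in N(v)$ and dividing by $2$ gives the bound.

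Set $q = m/(32n^2)$ and let $E^* \subset E(G)$ be obtained by including each edge independently with probability $q$. Write $B \subset E^*$ for the set of edges sharing a vertex with some other edge of $E^*$, and put $M := E^* \setminus B$, so that $M$ is a matching. I would verify via Chernoff and Markov that, with positive probability, the following three events hold simultaneously: (i) $|E^*| \leq m/16$; (ii) $|E^* \cap G[N(v)]| \geq \eps m/128$ for every vertex $v$; and (iii) the number $Y$ of unordered pairs of sampled edges that share a vertex satisfies $Y \leq \eps m/512$. Events (i) and (ii) are sums of independent Bernoullis with means $q \cdot |E(G)| \leq m/64$ and $q \cdot e(G[N(v)]) \geq \eps m/64$ respectively; since $m \geq (\log n)^2$, Chernoff gives failure probability at most $n^{-3}$ for each, and a union bound covers all $v$ in (ii). For (iii) one computes
$$
\E[Y] \;\leq\; \sum_{v \in V(G)} \binom{d(v)}{2} q^2 \;\leq\; \frac{n^3 q^2}{2} \;\leq\; \frac{\eps m}{2048},
$$
where the last inequality uses $m \leq \eps n$, and Markov's inequality then yields (iii) with probability at least $3/4$.

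On the intersection of these events, $|M| \leq |E^*| \leq m/16$, so $C = 1$ suffices; and since $|B| \leq 2Y$, for every vertex $v$
$$
e(M \cap G[N(v)]) \;\geq\; |E^* \cap G[N(v)]| - |B| \;\geq\; \frac{\eps m}{128} - \frac{\eps m}{256} \;=\; \frac{\eps m}{256} \;>\; cm,
$$
the last inequality using $c < 2^{-10}\eps < \eps/256$. The main obstacle will be calibrating the sampling rate $q$: taking it too large lets the second-order ``conflict'' quantity $Y \sim q^2 n^3 \sim m^2/n$ swallow the first-order quantity $q \cdot e(G[N(v)]) \sim q n^2$, while taking it too small destroys Chernoff concentration for $|E^* \cap G[N(v)]|$ in the regime $m = (\log n)^2$. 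The hypothesis $m \leq \eps n$ is exactly what ensures, for the choice $q = m/(32n^2)$, that the second-order term sits safely below the first.
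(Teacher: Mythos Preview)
Your argument is correct and follows essentially the same sample-and-delete strategy as the paper: random sparsification at rate $\Theta(m/n^2)$, Chernoff for the per-vertex lower bound on sampled edges inside $G[N(v)]$, and a Markov bound on the $O(q^2 n^3)$ expected number of conflicting pairs, using $m \le \eps n$ to make the conflict term negligible. The only cosmetic differences are that the paper samples from $G(n,p)$ and then intersects with $G$ (rather than sampling directly from $E(G)$) and takes $M$ to be a maximum matching in the sampled graph rather than explicitly deleting conflicting edges; your version even yields the slightly sharper constant $C=1$.
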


\begin{proof}
First, note that for every $v\in V(G)$, the number of edges in $G[N(v)]$ is at least
$$e(G[N(v)])\geq\varepsilon n^2/2.$$
Indeed, since $\delta(G) \geq (1/2 + \varepsilon)n$, it follows that $|N(u) \cap N(v)| \geq 2\varepsilon n$ for every $u,v \in V(G)$, and therefore $e(G[N(v)]) \geq \delta(G) \cdot \varepsilon n \geq \varepsilon n^2/2$, as claimed.

Set $C = 8c/\varepsilon$, and let $G^*\sim G(n,p)$ be a random graph on $V(G)$ with $p = Cm/n^2 \in (0,1)$. Since $m \geq (\log n)^2$, by applying Chernoff's inequality and taking a union bound we deduce that with high probability the following holds: the number of edges in $G^*$ is at most $Cm$,
and for every $v\in V(G)$, the number of edges in $G[N(v)]\cap G^*$ is at least 
$$e\big(G[N(v)]\cap G^*\big)\geq e(G[N(v)])\cdot p\cdot 1/2 \geq \frac{Cm}{n^2} \cdot \frac{\eps n^2}{4} = 2cm.$$
For any graph $H$, let $E'(H)\subset E(H)$ be the set of edges that are \emph{not isolated}, meaning that they share an endpoint with some other edge. Observe that $|E'(G)|$ is at most the number of walks of length $2$ in $G$, so $\E[ |E'(G^*)| ] \leq p^2 n^3 \leq C^2m^2 / n$, 
and therefore
$$\Prob\big( |E'(G^*)| \geq cm \big) \leq \frac{\E[ |E'(G^*)| ]}{cm} \leq \frac{C^2m^2}{cmn} \leq\frac{2^6 c}{\varepsilon} \leq 2^{-4}$$
by Markov's inequality, and since $m \le \eps n$ and $c < 2^{-10}\varepsilon$. 
Hence there exists a graph $G_0$ with $V(G_0)=V(G)$, such that $e(G_0)\leq Cm$, $|E'(G_0)|<cm$ and
$e\big(G[N(v)]\cap G_0\big)\geq 2cm$ for every $v\in V(G)$. 

Finally, let $M$ be the largest matching in $G_0\cap G$. Observe that $e(M) \leq e(G_0) \leq Cm$, and that for every $v\in V(G)$, the number of edges in $G[N(v)]\cap M$ is at least
$$e\big(G[N(v)]\cap M\big)\geq  e\big(G[N(v)] \cap G_0\big) -|E'(G_0)|> cm,$$
as required.
\end{proof}

We are now ready to construct our absorber, using the matching given by \Cref{lem:cF}.

\begin{proof}[Proof of \Cref{lem:absorber}]
Let $C_0 = C_0(\eps,c) > 0$ be the constant given by \Cref{lem:cF}, and set $C = 2 C_0 / \eps$, so $C_0 m \le \varepsilon n/2$. By \Cref{lem:cF}, we obtain a matching $M \subset G$ with edge set
$$E(M) = \big\{ x_1y_1, x_2y_2, \ldots, x_ty_t \big\}$$
for some $t \leq C_0 m \leq \varepsilon n / 2$, such that
\begin{align}\label{eq:qq1}
e\big(M\cap G[N(v)]\big) > cm
\end{align}
for every vertex $v\in V(G)$. We now construct the path $A$ by connecting consecutive edges of $M$ using $t-1$ additional vertices. To be precise, suppose that for some $1 \leq k \leq t-1$
we have already found a path $A_k$ with 
$$E(A_k) = \big\{ x_1y_1, y_1z_1, z_1x_2, x_2y_2, \ldots, y_{k-1}z_{k-1}, z_{k-1}x_k, x_{k}y_{k} \big\}$$
where $z_i\notin V(M)$ and $z_i\neq z_j$ for $i\neq j$. Since $|N(y_k)\cap N(x_{k+1})|\geq 2\varepsilon n > 3t$, there exists 
$$z_k \in N(y_k) \cap N(x_{k+1}) \setminus \big( V(M) \cup V(A_k) \big)$$ 
and we can therefore continue this process until we obtain a path $A_t$ containing all of the edges of $M$. Set $A = A_t$,  and observe that $|A| = 3t+1 \leq Cm$.

Finally, we need to show that $A$ has the required `absorption' property. To see this, let $U = \{ u_1,u_2,...,u_s \} \subset V(G) \setminus V(A)$ with $s \leq cm$, and construct the path $A_U$ as follows. 
Let $0 \leq \ell < s$, and assume that we have already found a path $A'_\ell$ with the same endpoints as $A$, containing $t - \ell$ edges of $M$, and with 
$$V(A'_\ell) = V(A) \cup \big\{ u_1, \ldots, u_\ell \big\}.$$ 
We construct a path $A'_{\ell+1}$ by replacing an edge $x_iy_i \in E(M) \cap E(A'_\ell)$ by a path of length $2$ with edge set $\{ x_i u_{\ell+1}, u_{\ell+1} y_i \}$. We can do so because
$$e\big( M \cap A'_\ell \cap G[N(u_{\ell+1})] \big) \geq e\big( M \cap G[N(u_{\ell+1})] \big) - \ell > cm - s \ge 0,$$
by~\eqref{eq:qq1}. We may therefore continue this process until we obtain a path $A'_s$ with the same endpoints as $A$ and containing all of the vertices of $U$, as required.
\end{proof}

The second goal of this section is to prove the following `reservoir' lemma, which is based on~\cite[Lemma 2.7]{RRS}.

\begin{lemma}[Reservoir]\label{lem:reservoir}
For every $0 < c,\varepsilon < 1/2$ and $C > 1$, there exists $n_0 = n_0(c,\varepsilon,C)$ such that the following holds for every $n \geq n_0$ and $(\log n)^2 \leq m \leq \eps n/2C$. If $G$ is a graph on $n$ vertices with $\delta(G) \geq (1/2 + \varepsilon)n$, and $W \subset V(G)$ is a set with $|W| \leq Cm$, then there exists a set $R \subset V(G)\setminus W$ with $|R| = cm$ such that
$$|N(x)\cap N(y)\cap R| \geq \varepsilon|R|$$
for every pair of vertices $x,y\in V(G)$.
\end{lemma}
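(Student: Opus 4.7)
The plan is to obtain $R$ by a uniformly random choice of $cm$ vertices from $V(G) \setminus W$, and then use a concentration inequality together with a union bound over pairs $x,y \in V(G)$.

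First, I would observe that the minimum-degree condition $\delta(G) \geq (1/2+\varepsilon)n$ implies that every pair $x,y \in V(G)$ satisfies $|N(x) \cap N(y)| \geq 2\varepsilon n$, since
\[
|N(x) \cap N(y)| \geq |N(x)| + |N(y)| - n \geq 2\varepsilon n.
\]
Combined with the hypothesis $|W| \leq Cm \leq \varepsilon n/2$, this gives
\[
\bigl|\, N(x) \cap N(y) \cap (V(G)\setminus W) \,\bigr| \;\geq\; 2\varepsilon n - \tfrac{\varepsilon n}{2} \;=\; \tfrac{3}{2}\varepsilon n
\]
for every pair $x,y \in V(G)$.

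Next, let $R$ be chosen uniformly at random from the subsets of $V(G) \setminus W$ of size $cm$. For a fixed pair $(x,y)$, the random variable $X_{xy} := |N(x) \cap N(y) \cap R|$ follows a hypergeometric distribution with mean
\[
\mathbb{E}[X_{xy}] \;=\; \bigl|N(x) \cap N(y) \cap (V(G)\setminus W)\bigr| \cdot \frac{cm}{|V(G)\setminus W|} \;\geq\; \tfrac{3}{2}\varepsilon c m\cdot \frac{n - Cm}{n} \;\geq\; \tfrac{4}{3}\varepsilon cm,
\]
for $n$ large enough. In particular $\varepsilon cm \le (3/4)\,\mathbb{E}[X_{xy}]$, so by the standard Chernoff bound for hypergeometric random variables there exists an absolute constant $\kappa > 0$ with
\[
\Pr\bigl( X_{xy} < \varepsilon cm \bigr) \;\leq\; \exp\bigl( -\kappa \cdot \varepsilon cm \bigr).
\]

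Finally, I would take a union bound over the at most $n^2$ pairs. Since $m \geq (\log n)^2$, the bound becomes
\[
n^2 \exp\bigl(-\kappa \varepsilon c (\log n)^2\bigr) = o(1),
\]
so some choice of $R$ satisfies $|N(x) \cap N(y) \cap R| \geq \varepsilon cm = \varepsilon |R|$ simultaneously for every pair $x,y \in V(G)$, as required. No serious obstacle is expected here: the only things to verify carefully are that the constants leave enough slack between $\varepsilon |R|$ and $\mathbb{E}[X_{xy}]$ (they do, with factor $3/4$), and that the chosen form of Chernoff's inequality indeed applies to sampling without replacement, which it does by a classical result of Hoeffding.
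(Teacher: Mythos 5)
Your proof is correct and takes essentially the same approach as the paper: choose $R$ uniformly at random among the $cm$-subsets of $V(G)\setminus W$, lower-bound the hypergeometric mean via $|N(x)\cap N(y)|\ge 2\varepsilon n$ and $|W|\le \varepsilon n/2$, apply a Chernoff--Hoeffding bound for sampling without replacement, and take a union bound over the $n^2$ pairs using $m\ge(\log n)^2$. One small correction: since $|V(G)\setminus W|\le n$, the mean is directly at least $\tfrac{3}{2}\varepsilon cm$, so the factor $(n-Cm)/n$ in your display is superfluous --- and taken at face value your step to $\tfrac{4}{3}\varepsilon cm$ would require $Cm\le n/9$, which is not guaranteed when $\varepsilon$ is close to $1/2$ (and does not improve as $n\to\infty$, since $m$ may be as large as $\varepsilon n/2C$) --- but with the bound $\tfrac{3}{2}\varepsilon cm$ the rest of your argument goes through unchanged.
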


\begin{proof}
Let $R^*$ be a random set of size $cm$, chosen uniformly from all $cm$-sets in $V(G) \setminus W$. For each $x,y \in V(G)$, let $X_{x,y} = | N(x)\cap N(y)\cap R^* |$, and note that
\begin{align*}
\E\big[ X_{x,y} \big] \ge \frac{|N(x)\cap N(y)|-|W|}{n-|W|} \cdot |R^*| \geq \frac{2\varepsilon n - Cm}{n - Cm} \cdot cm \geq \frac{3\varepsilon cm}{2},
\end{align*}
where the final inequality holds because $Cm < \varepsilon n/2$. By Hoeffding's inequality~\cite{H}, it follows that
\begin{align*}
\Prob\big( X_{x,y} < \varepsilon |R^*| \big) \leq \Prob\big( X_{x,y} < 2\E[X_{x,y}]/3 \big) \leq \exp\big( -\Theta(m)\big) \le n^{-3}, 
\end{align*}
since $n \ge n_0$ and $m \ge (\log n)^2$. By the union bound over pairs $x,y \in V(G)$, it follows that there exists a set $R$ of size $cm$ such that $X_{x,y} \ge \varepsilon |R^*|$ for every $x,y \in V(G)$, as required.
\end{proof}

\smallskip

\section{Finding a near-rainbow Hamilton cycle}\label{sec:HC}

Using the rainbow forest given by  \Cref{lem:key}, the absorber given by \Cref{lem:absorber} and the reservoir given by \Cref{lem:reservoir}, we can now deduce Theorems~\ref{thm:main}. 
In fact, we will prove the following slight strengthening of~\Cref{thm:main}.


\begin{theorem}\label{thm:main1'}
Given $\beta > 0$ be sufficiently small and $0 < \varepsilon \leq 1/2$, there exist $b = b(\varepsilon,\beta)$ and $n_0 = n_0(\varepsilon,\beta)$ such that
the following holds. For every $n \geq n_0$, and every globally $n/8$-bounded proper edge-coloured graph $G$ on $n$ vertices with minimum degree $\delta(G) \geq (1/2 + \varepsilon)n$, there exists a Hamilton cycle in $G$ with at least $n-bn^{1-\beta}$ distinct colours.
\end{theorem}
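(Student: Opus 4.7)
My plan is to combine the three lemmas from the preceding sections via the absorber--reservoir--almost-cover strategy sketched in the introduction. For the parameter setup, I would fix $\alpha > \beta$ small enough that \Cref{lem:key} applies, let $c$ be the absorption constant guaranteed by \Cref{lem:absorber}, pick some $c' < c/2$ for the reservoir, and set $m = \lfloor n^{1-\beta}/K \rfloor$ for a large constant $K = K(\varepsilon,\beta)$ chosen to make the final colour count work out.

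First I would apply \Cref{lem:absorber} with this $m$ to obtain an absorber path $A \subset G$ with $v(A) \leq C_0 m$, and then \Cref{lem:reservoir} with $W = V(A)$ to obtain a reservoir $R \subset V(G) \setminus V(A)$ with $|R| = c' m$ and $|N(x) \cap N(y) \cap R| \geq \varepsilon |R|$ for every $x,y \in V(G)$. Setting $G'' = G - V(A) - R$, only $O(m) = o(n)$ vertices are removed, so the minimum-degree condition $\delta(G'') \geq (1/2 + \varepsilon/2) v(G'')$ and the $(v(G'')/8 + o(v(G'')))$-bounded proper edge-colouring both transfer to $G''$. Applying \Cref{lem:key} to $G''$ then yields a rainbow forest $F$ with $k \leq v(G'')^{1-\alpha}$ vertex-disjoint paths, each of length at least $v(G'')^{\alpha}$, satisfying $v(F) \geq v(G'') - 2 v(G'')^{1-\alpha}$.

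Next I would assemble the paths of $F$ and $A$ into a single cycle $C^*$: fix any cyclic order on these $k+1$ paths and, for each pair of consecutive endpoints, greedily pick an unused common neighbour in $R$. Since each common neighbourhood inside $R$ has size at least $\varepsilon |R| \gg n^{1-\alpha} \geq k+1$, this greedy procedure never fails. The leftover set $U := V(G) \setminus V(C^*)$ consists of the unused reservoir vertices plus the handful of vertices missed by $F$, so $|U| \leq |R| + 2 n^{1-\alpha} \leq cm$, and by the absorption property of $A$ I may replace the sub-path $A$ of $C^*$ by a path $A_U$ with the same endpoints and vertex set $V(A) \cup U$, producing a Hamilton cycle $C$ of $G$. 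Since every edge of $F$ is still present in $C$ and $F$ is rainbow, $C$ contains at least $e(F) = v(F) - k \geq n - O(m) - O(n^{1-\alpha}) \geq n - b n^{1-\beta}$ distinct colours, for a suitable $b = b(\varepsilon,\beta)$.

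The only delicate point I anticipate is the parameter bookkeeping that ties the above together: one must choose $m$ large enough that \Cref{lem:absorber} and \Cref{lem:reservoir} both apply, yet small enough that the total loss $O(m)$ fits inside the colour-count budget $b n^{1-\beta}$; and one must take $\alpha$ strictly larger than $\beta$ so that the $O(n^{1-\alpha})$ error terms from \Cref{lem:key} are negligible compared to both the connection budget $\varepsilon|R|$ inside $R$ and the absorber capacity $cm$. Everything else --- the transfer of the degree and colour-boundedness hypotheses from $G$ to $G''$, and the greedy selection of reservoir vertices for connecting paths --- is routine given the lemmas already at hand.
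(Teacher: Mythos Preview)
Your proposal is correct and follows essentially the same approach as the paper: build the absorber (\Cref{lem:absorber}) and reservoir (\Cref{lem:reservoir}) first, apply \Cref{lem:key} to the remaining graph, connect the forest and absorber through the reservoir, and absorb the leftover vertices. The only differences are cosmetic parameter choices---the paper takes $\alpha = 2\beta$ and $m = n^{1-\beta}$ directly rather than your generic $\alpha > \beta$ and $m = \lfloor n^{1-\beta}/K\rfloor$---and your bookkeeping sketch handles the interplay between these parameters correctly.
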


\begin{proof}
First let's construct an absorber $A$ and a reservoir $R$ in $G$. By \Cref{lem:absorber}, applied with $c = 2^{-11}\varepsilon$ and $m = n^{1-\beta}$, there exists a constant $C = C(\eps) > 0$ and a path $A \subset G$ with 
$$v(A) \le C n^{1-\beta},$$ 
such that for any set $U\subset V(G)\setminus V(A)$ with $|U|\leq cn^{1-\beta}$, there is a path $A_{U}\subset G$ with the same endpoints as $A$ and with $V(A_U) = V(A) \cup U$. In other words, 
$A$ can absorb any set $U$ with $|U|\leq cn^{1-\beta}$. Moreover, by \Cref{lem:reservoir} applied with $W = V(A)$, there exists a set $R \subset V(G) \setminus V(A)$ of size $cn^{1-\beta}/2$ 
such that 
\begin{equation}\label{eq:R:common:nbrs}
|N(v)\cap N(u)\cap R| \geq \varepsilon|R|\geq \varepsilon cn^{1-\beta}/2
\end{equation}
for every $x,y\in V(G)$.

Next let's construct a near-spanning rainbow forest in $G_0 = G - \big( V(A) \cup R \big)$. Note that
\begin{align*}
v(G_0) = n - v(A) - |R| \qquad \textup{and} \qquad \delta(G_0) \geq (1/2 + \varepsilon/2) n,
\end{align*}
since $\beta > 0$ and $n$ is sufficiently large. Moreover, the colouring of $G_0$ is  globally $n/8$-bounded, since $G_0 \subset G$. Since $v(G_0) = n - o(n)$, we may therefore apply \Cref{lem:key} to $G_0$ with $\alpha = 2\beta$, and obtain a rainbow path-forest $F \subset G_0$ consisting of at most $n^{1-\alpha} = n^{1 - 2\beta}$ vertex-disjoint paths and with
$$v(F) + v(A) + |R| \ge n - 2n^{1 - \alpha} = n - o(n^{1-\beta}).$$

We will now use the vertices of $R$ to connect the vertices of $A$ and the paths in $F$ in order to construct an almost-rainbow Hamilton cycle. Let $x_0$ and $y_0$ be the endpoints of $A$, and let $F = P_1 \cup \cdots \cup P_T$, where $P_i$ is a path with endpoints $\{x_i,y_i\}$ and $T \leq n^{1 - 2\beta}$. For each $0 \leq i \leq T$, the vertices $y_i$ and $x_{i+1}$ (where $x_{T+1}=x_0$) have at least 
$$|N(y_i) \cap N(x_{i+1}) \cap R| \geq \varepsilon c n^{1-\beta}/2 > 4n^{1 - 2\beta}$$
common neighbors in $R$, by~\eqref{eq:R:common:nbrs}, and since $\beta > 0$. Hence we can greedily choose vertices $z_i \in N(y_i)\cap N(x_{i+1})\cap R\setminus\{z_0,...,z_{i-1}\}$ for each $0\leq i\leq T$, since $T+1 \leq n^{1 - 2\beta}$. Adding the edges $\big\{y_iz_i, z_ix_{i+1} : 0 \leq i \leq T \big\}$ to the graph $A \cup F$, we obtain a cycle $H$ with
$$v(H) \geq v(F) + v(A) \geq n - |R| - o(n^{1-\beta})$$
whose edges have at least $e(F) \ge n - bn^{\beta}$ distinct colours, where $b = C + c$. 

Finally, let $U = V(G) \setminus V(H)$, and observe that since
$|U| \leq |R| + o(n^{1-\beta}) \le cn^{1-\beta}$, there exists a path $A_{U} \subset G$ with the same endpoints as $A$ and with $V(A_U) = V(A) \cup U$. Replacing the path $A$ by $A_U$ in $H$, we obtain a near-rainbow Hamilton cycle, as required.
\end{proof}


It follows easily from \Cref{thm:main} that, even without any global boundedness condition, every proper edge-colouring of $G$ contains a Hamilton cycle with $n/4 - o(n)$ distinct colours. 

\begin{proof}[Proof of Corollary~\ref{cor:only:proper}]
Note first that any proper edge-colouring $c$ of a graph $G$ on $n$ vertices is $n/2$-bounded. We can therefore construct a proper $n/8$-bounded edge-colouring of $G$ from $c$ by replacing each colour in $c$ by four colours. Applying \Cref{thm:main} to this new colouring $c'$, we obtain a Hamilton cycle in $G$ with $n - o(n)$ colours in $c'$, and therefore with at least $n/4 - o(n)$ distinct colours in $c$, as required. 
\end{proof}

\medskip

\section{Globally-bounded colourings without near-rainbow Hamilton cycles}\label{sec:counterexample}

In this section we will show that the constant 1/8 in \Cref{thm:main} is best possible. This is an immediate consequence of the following more general theorem.

\begin{theorem}\label{thm:main2}
Let $n,m,s,t \in \N$ satisfy $m,t\leq2^{-20}n$, $s \le n/8$, and
\begin{align}\label{eq:nmkt}
2^{-10}s\geq\max\big\{ \sqrt{mn},\, \sqrt{tn},\, (n\log n)^{2/3}\big\}.
\end{align}
Then there exists a coloured graph $G$ on $n$ vertices such that
\begin{enumerate}
\item[$(i)$] $\delta(G) \ge n/2 + m;$

\item[$(ii)$] the colouring of $G$ is proper and $(n/8+s)$-bounded;

 \item[$(iii)$] every Hamilton cycle in $G$ has at most $n-t$ distinct colours.
\end{enumerate}
\end{theorem}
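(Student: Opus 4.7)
My plan is to construct $G$ explicitly, combining a balanced partition with shared colour classes between the two halves in order to force colour repetitions in every Hamilton cycle.

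\emph{Graph.} Partition $V(G)=V_1\cup V_2$ with $|V_1|=|V_2|=n/2$, set $G[V_1]=G[V_2]=K_{n/2}$, and add between $V_1$ and $V_2$ a bipartite graph $H$ with $\delta(H)\ge m+1$ on each side (e.g.\ any $(2m+2)$-regular bipartite graph). Then $\delta(G)\ge (n/2-1)+(m+1)=n/2+m$, which gives~(i).

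\emph{Colouring.} Colour the cross edges of $H$ with a fresh proper $(n/8+s)$-bounded palette, which is routine since $|H|\le (m+1)n$. For the edges inside $V_1\cup V_2$, use a \emph{shared} palette: take proper edge colourings of $K_{n/2}$ on each side (say from a $1$-factorisation), refine each colour class into sub-matchings of size at most $(n/8+s)/2$, and \emph{pair} each refined $V_1$-matching with a refined $V_2$-matching to form a single colour class. Because $V_1$ and $V_2$ are disjoint, each such union of two matchings is itself a matching, so the resulting colouring is proper and $(n/8+s)$-bounded, giving~(ii).

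\emph{Forced repetitions.} Let $C$ be any Hamilton cycle of $G$, write $c=|E(C)\cap E(H)|$, and let $F_i=E(C)\cap E(G[V_i])$; a degree count gives $|F_1|=|F_2|=(n-c)/2$. The number of distinct colours used by $C$ is at most $|\cC(F_1)\cup \cC(F_2)|+c$, where $\cC(F_i)$ is the set of shared-palette colours appearing on $F_i$, so the colour deficit of $C$ is at least $|\cC(F_1)\cap \cC(F_2)|$. I plan to force $|\cC(F_1)\cap \cC(F_2)|\ge t$ for every $C$ by choosing the pairing of $V_1$- and $V_2$-matchings uniformly at random. For a fixed $C$, a direct counting identity shows that $\E[|\cC(F_1)\cap \cC(F_2)|]=\Omega(n)$, and concentration for uniform random bijections (Azuma/Hoeffding, noting that a single transposition moves the count by $O(1)$) makes this bound hold with exponentially small failure probability for that cycle.

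\emph{Main obstacle.} The hardest step is to carry out (iii) simultaneously for every Hamilton cycle, since there can be up to $n!$ of them, with log-entropy $\Theta(n\log n)$. Plain Azuma gives $\exp\!\big(-\Omega(\delta^2/n)\big)$ for deviation $\delta$, which naively forces $\delta \gg n\sqrt{\log n}$ and leaves essentially no budget for $t$. My intended route is to replace the union bound over Hamilton cycles by one over the coarser space of ``cycle profiles'' $(F_1,F_2)$ (pairs of path forests spanning $V_1$ and $V_2$), and to apply a Talagrand-type certificate-based concentration inequality in place of Azuma, using the fact that each event contributing to $|\cC(F_1)\cap \cC(F_2)|$ has a small certificate. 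The conditions $2^{-10}s\ge\sqrt{mn},\sqrt{tn},(n\log n)^{2/3}$ in~\eqref{eq:nmkt} should then fall out of balancing the Talagrand deviation against the profile entropy, with the $\sqrt{mn}$ and $\sqrt{tn}$ terms controlling the fluctuations of $|F_1|,|F_2|$ and of the intersection itself, and the $(n\log n)^{2/3}$ term arising from the certification cost.
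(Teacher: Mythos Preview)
Your approach is genuinely different from the paper's and, as it stands, has a real gap at step~(iii).

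\textbf{What the paper does.} The paper's construction is \emph{asymmetric} and avoids any union bound over Hamilton cycles. It builds a graph $H$ on $n/2+q$ vertices (with $q=2^{-10}s$) that has minimum degree $\ge q+m$ and a proper edge-colouring using only $2q-t$ colours, each on at most $n/8+s$ edges; this is the content of their Lemma~5.1, and the conditions in~\eqref{eq:nmkt} arise precisely from the numerics of that lemma (the $(n\log n)^{2/3}$ term comes from the $\sqrt{d}\log n$ error in a Chernoff bound, not from any certification cost). They then add $n/2-q$ new vertices as an independent set, each joined to all of $V(H)$ by rainbow edges. A one-line degree count shows every Hamilton cycle uses at least $2q$ edges inside $H$, so it sees at most $(2q-t)+(n-2q)=n-t$ colours. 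The forcing is deterministic for every cycle; no probabilistic argument over cycles is needed.

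\textbf{Where your plan breaks.} First, the claim $\E[|\cC(F_1)\cap\cC(F_2)|]=\Omega(n)$ is not justified. The expectation equals $|S_1||S_2|/N$ where $S_i$ is the set of $V_i$-matchings hit by $F_i$ and $N=\Theta(n)$, and $|S_i|$ can be tiny: a long path in $K_{n/2}$ can alternate between just two $1$-factors, so after your refinement $|S_i|$ can be $O(1)$ even when $|F_i|=\Theta(n)$. Second, even granting $\Omega(n)$ expectation, the concentration-plus-union-bound scheme cannot close. The random object is a bijection on $N=\Theta(n)$ points, and $|\cC(F_1)\cap\cC(F_2)|$ is $1$-Lipschitz under transpositions with certificates of size equal to its value, so both Azuma and Talagrand give tails of order $\exp(-\Theta(\delta^2/n))$. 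Reducing from Hamilton cycles to profiles $(S_1,S_2)$ still leaves $\exp(\Theta(n))$ objects, which forces $\delta=\Omega(n)$ --- the same order as the expectation --- so you cannot conclude the intersection is $\ge t$ for all profiles. Finally, nothing in your construction prevents the cycle from living mostly in the cross graph $H$ (a $(2m+2)$-regular bipartite graph can contain long paths), in which case $|F_1|,|F_2|$ are small and the whole mechanism collapses. In short, the randomised pairing idea does not have enough entropy or enough deterministic structure to handle every Hamilton cycle; the paper sidesteps this by making the colour deficit a consequence of a pigeonhole count rather than a tail bound.
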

\medskip

Before proving \Cref{thm:main2}, let us quickly note that it implies \Cref{prop:necess}.

\begin{proof}[Proof of \Cref{prop:necess}]
Let $c'=\min\{c,1/4\}$. Observe that if 
\begin{align*}
\varepsilon =(c'-1/8)^2\cdot2^{-20},\qquad s=(c'-1/8)n,  \qquad m=t=\varepsilon n,
\end{align*}
then~\eqref{eq:nmkt} holds, since $1/8 < c' \le 1/4$ and $n > n_0(c)$ is sufficiently large. By \Cref{thm:main2}, it follows that there exists a graph $G$ on $n$ vertices with $\delta(G) \ge (1/2+ \eps)n$, and a proper and globally $c'n$-bounded colouring of $G$ such that every Hamilton cycle in $G$ has at most $(1-\eps)n$ distinct colours, as required.
\end{proof}

In the proof of \Cref{thm:main2}, we will need the following simple lemma.

\begin{lemma}\label{lem:GB}
Let $n,d,k,\ell \in \N$, 
with $\ell \le n/2$ and $d \le 2^{-5} n$. If 
\begin{equation}\label{eq:GB:condition}
\frac{2k\ell}{n} > d + 2^5 d^2/n + 4\sqrt{d} \log n, 
\end{equation} 
then there exists a graph $G$ with $n$ vertices and $\delta(G) \ge d$, and a proper edge-colouring of $G$ using at most $k$ colours with each colour being used on at most $\ell$ edges.  
\end{lemma}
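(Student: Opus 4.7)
The plan is to construct $G$ as the union of a uniformly random selection of matchings drawn from a careful edge-decomposition of $K_n$. This structured randomness avoids multi-edge issues entirely, reducing the analysis to a single concentration inequality.

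First I would decompose $E(K_n)$ into $K$ edge-disjoint matchings of size exactly $\ell$. For even $n$, take a $1$-factorization of $K_n$ into $n-1$ perfect matchings of size $n/2$, and split each perfect matching into $\lfloor n/(2\ell)\rfloor$ sub-matchings of size $\ell$ (discarding any residue of size $<\ell$); for odd $n$ use a near-$1$-factorization instead. This yields $K = (n-1)\lfloor n/(2\ell)\rfloor$ edge-disjoint matchings, and crucially every vertex $v \in V(K_n)$ lies in exactly $n-1$ of them --- one for each edge at $v$ in $K_n$.

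Next I would pick a uniformly random $k$-subset $\mathcal S$ of these $K$ matchings, define $G := \bigcup_{M \in \mathcal S} M$, and assign each $M \in \mathcal S$ its own colour. The colouring is automatically proper (each colour class is a matching), uses exactly $k$ colours, and uses each colour on exactly $\ell$ edges. For each vertex $v$, $\deg_G(v)$ is hypergeometrically distributed with mean $\mu := k(n-1)/K \ge 2k\ell/n$. Hoeffding's Chernoff bound for hypergeometric sampling gives
\[
\Pr\bigl[\deg_G(v) < d\bigr] \le \exp\bigl(-(\mu-d)^2/(2\mu)\bigr),
\]
and the hypothesis $2k\ell/n > d + 4\sqrt{d}\log n$, combined with $d \le 2^{-5}n$, is precisely what is needed to make this at most $n^{-2}$. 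A union bound over the $n$ vertices then yields $\delta(G) \ge d$ with probability at least $1-1/n$, in particular with positive probability, completing the construction.

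The hard part will be managing the rounding in Step~1 when $n$ is odd or $\ell \nmid n/2$, and checking that the Chernoff estimate gives the required $n^{-2}$ tail in \emph{every} regime allowed by the hypothesis: when $\mu \approx d$ the dominant balance is $(\mu-d)^2 \ge 16d\log^2 n$ versus $4\mu\log n$, while when $\mu \gg d$ one must argue separately that $\mu/8 \ge 2\log n$, and when $d$ is a small constant the bound must be checked by hand. The extra $2^5 d^2/n$ slack term in the stated condition is not strictly required for this approach; it appears naturally if one instead proves the lemma by taking $k$ \emph{independent} uniformly random matchings of size $\ell$, where edges can appear in more than one matching and the per-vertex multi-incidence count has expectation $\Theta(d^2/n)$ that must be absorbed into the minimum-degree calculation.
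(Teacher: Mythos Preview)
Your approach differs from the paper's: the paper samples $k$ \emph{independent} uniformly random matchings of size $\ell$ in $K_n$, obtaining a multigraph $H$ whose simplification is $G$; the $2^5 d^2/n$ term is then spent bounding, for each vertex, how many of the first $d+h$ incident matchings repeat an earlier edge. You instead sample $k$ matchings from a fixed pool of \emph{edge-disjoint} ones extracted from a $1$-factorization, so $G$ is automatically simple and the whole analysis should reduce to one hypergeometric tail bound. This is an appealing simplification, and you even correctly describe the paper's independent-matching route in your closing remark.

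However, your Step~1 contains a real error, not merely a rounding nuisance. You claim that after discarding the residue of size $<\ell$ from each perfect matching, every vertex still lies in exactly $n-1$ of the $K$ resulting sub-matchings. This is false: a vertex $v$ lies in a sub-matching of the $j$th perfect matching only if the unique edge at $v$ in that perfect matching was \emph{not} discarded. In the worst case (for instance $\ell=\lfloor n/4\rfloor+1$, so $\lfloor n/(2\ell)\rfloor=1$) almost half of every perfect matching is thrown away, the number $D_v$ of surviving sub-matchings through $v$ averages only about $(n-1)/2$, and it varies from vertex to vertex depending on how the residues were chosen. Your hypergeometric mean $\mu=k(n-1)/K$ is then simply wrong for individual vertices, and the argument collapses. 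Keeping the residues as extra (smaller) colour classes restores $D_v=n-1$ exactly, but inflates $K$ to $(n-1)\lceil n/(2\ell)\rceil$ and can drop $\mu$ to $k\ell/n$, a factor of $2$ short of what the hypothesis provides. The honest fix is to randomize the residue inside each perfect matching and run a separate Chernoff argument to show that all $D_v$ are close to $2K\ell/n$ before sampling $\mathcal S$; this works, but it adds back a layer of the same kind of analysis you were hoping to sidestep, so the net saving over the paper's multi-edge calculation is slim.
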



\begin{proof}
Let $M_1,...,M_k$ be random matchings of size $\ell$, chosen uniformly and independently at random in $K_n$, let $H$ be the multigraph formed by taking the union of these matchings, and let $G$ be the graph formed by choosing one edge for each multiple edge of $H$. Define a colouring of $G$ by giving colour $i$ to each edge of $M_i$. 
By construction, this is a proper colouring using at most $k$ colours, and each colour is used on at most $\ell$ edges. 

We claim that $\delta(G) \ge d$ with positive probability. To see this, observe first that
$$d_H(v) \sim \textup{Bin}(k,2\ell/n)$$
for each $v \in V(H)$, and that therefore
$$\Pr\big( d_H(v) < d + h \big) \le e^{-h^2/4d} \leq \frac{1}{n^2},$$ 
where $h = 2^4d^2/n + 2\sqrt{d} \log n$, by Chernoff's inequality and~\eqref{eq:GB:condition}. 



Now, let $A_v$ denote the event $\{ d_H(v) \ge d + h \}$, and partition $A_v$ according to the first $d + h$ matchings (in the order $(M_1,...,M_k)$) that use an edge incident to $v$. Let $B_v$ be the `bad' event $\{ d_G(v) < d \}$, and observe that if $A_v \cap B_v$ occurs, then there exists a set of $h$ of these $d + h$ matchings that contains an edge incident to $v$ that was already used by one of the earlier matchings. Note also that, conditioned on the event that a matching uses an edge incident to $v$, it uses one of the (at most $d$) previously-chosen edges with probability at most $d/(n-1)$. Taking a union bound over the choice of $d+h$ matchings and the choice of $h$ matchings that repeat an edge, it follows that 
\begin{align*}
\Prob\big( A_v \cap B_v \big) & \leq \binom{d+h}{h} \cdot \bigg( \frac{d}{n-1} \bigg)^{h} \leq \bigg( \frac{ed(d+h)}{h(n-1)} \bigg)^{h} \le e^{-h} \le \frac{1}{n^2},
\end{align*}
since $h = 2^4d^2/n + 2\sqrt{d} \log n$. 
Finally by taking a union bound over $v \in V(G)$, it follows that $\delta(G) \ge d$ with positive probability, as claimed.
\end{proof}

We are now ready to prove \Cref{thm:main2}.

\begin{proof}[Proof of \Cref{thm:main2}]
Given $n,m,s,t \in \N$ with $m,t \leq 2^{-20}n$ and satisfying~\eqref{eq:nmkt}, set $q=2^{-10}s$ and consider the following construction. First, let $H$ be graph given by \Cref{lem:GB}, applied with
$$v(H) = n/2+q, \quad d = q + m, \quad \ell = n/8+s \quad \text{and} \quad k = 2q - t.$$
To see that the conditions of the lemma are satisfied, note first that $2^{10} (m+t) \le s \le n/8$, by~\eqref{eq:nmkt}, and therefore $\ell \le v(H)/2$ and $d \le 2^{-5} v(H)$, and moreover
$$\frac{2k\ell}{v(H)} = \frac{2(2q - t)(n/8+s)}{n/2 + q} > q + \frac{8qs}{n} - t - \frac{4q^2}{n}.$$
Since $qs \ge (m+t)n$ and $q^{1/2} s \ge 4 n \log n$, by~\eqref{eq:nmkt}, it follows that
$$\frac{2k\ell}{v(H)} > q + m + \frac{2^{8}q^2}{n} + 8\sqrt{q} \log n > d + \frac{2^6 d^2}{n} + 4\sqrt{d} \log n,$$
as required. Thus, by \Cref{lem:GB}, there exists a properly edge-coloured graph $H$ with $n/2 + q$ vertices and minimum degree $\delta(H) \ge q+m$, using at most $2q-t$ colours, and with each colour being used on at most $n/8 + s$ edges. 



Now, let $G$ be the graph on $n$ vertices obtained from $H$ by adding $n/2 - q$ vertices, each with neighbourhood $V(H)$, and giving each of these new edges a (new) distinct colour. Note that $\delta(G) \ge n/2 + m$, since $q \ge m$, and that the colouring of $G$ is proper and $(n/8+s)$-bounded, since $H$ has this property and each new edge is given a different colour. Finally, note that each Hamilton cycle in $G$ uses at least $2q$ edges of $H$, and therefore contains at most $n - t$ distinct colours, since the colouring of $H$ uses only $2q - t$ different colours. 
\end{proof}





\Cref{thm:main2} 
also implies that one 
can't expect to find a rainbow Hamilton cycle in an arbitrary properly coloured Dirac graph if the colouring is not $\big(n/8 + o(n)\big)$-bounded.

\begin{corollary}\label{prop:123}
For all sufficiently large $n \in \N$, there exists a graph $G$ on $n$ vertices with $\delta(G) = n/2 + 1$ and a $\big(n/8 + 2^{10}(n\log n)^{2/3}\big)$-bounded proper edge-colouring of $G$ that does not contain a rainbow Hamilton cycle.    
\end{corollary}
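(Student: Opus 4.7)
The plan is to deduce Corollary~\ref{prop:123} directly from Theorem~\ref{thm:main2} by a suitable choice of parameters. To guarantee that every Hamilton cycle misses at least one colour (i.e.\ is not rainbow), I take $t = 1$; to ensure the Dirac-type lower bound on the minimum degree, I take $m = 1$; and to match the claimed $\big(n/8 + 2^{10}(n\log n)^{2/3}\big)$-boundedness I take $s = 2^{10}(n\log n)^{2/3}$.

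Next I would verify the three hypotheses of Theorem~\ref{thm:main2}. For $n$ sufficiently large, the bounds $m = t = 1 \le 2^{-20}n$ and $s = 2^{10}(n\log n)^{2/3} \le n/8$ are immediate, since $(n\log n)^{2/3} = o(n)$. For condition~\eqref{eq:nmkt}, note that $2^{-10}s = (n\log n)^{2/3}$, while $\sqrt{mn} = \sqrt{tn} = \sqrt{n} = o\big((n\log n)^{2/3}\big)$, so the maximum on the right-hand side equals $(n\log n)^{2/3}$ for all large $n$ and~\eqref{eq:nmkt} holds with equality in the dominating term.

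Applying Theorem~\ref{thm:main2} with these parameters then yields a properly edge-coloured graph $G$ on $n$ vertices with $\delta(G) \ge n/2 + 1$, whose colouring is $\big(n/8 + 2^{10}(n\log n)^{2/3}\big)$-bounded, and in which every Hamilton cycle uses at most $n - 1$ distinct colours; in particular no Hamilton cycle is rainbow. Since $G$ is a Dirac graph it does contain Hamilton cycles, so this is a genuine counterexample.

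There is no serious obstacle here: the content of the corollary is entirely contained in Theorem~\ref{thm:main2}, and the proof reduces to the substitution above and a routine verification of~\eqref{eq:nmkt}. The only minor cosmetic point is that Theorem~\ref{thm:main2} gives $\delta(G) \ge n/2 + 1$ rather than equality; if exact equality is desired one can either reinterpret ``$\delta(G) = n/2 + 1$'' as the standard Dirac-type condition, or delete a small number of carefully chosen edges from $G$ (keeping the colouring proper and the boundedness unchanged) to lower the minimum degree to exactly $n/2 + 1$ without creating any new Hamilton cycles that are rainbow.
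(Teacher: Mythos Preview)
Your proposal is correct and follows exactly the same approach as the paper: set $s = 2^{10}(n\log n)^{2/3}$, $m = 1$, $t = 1$, and apply Theorem~\ref{thm:main2}. Your verification of the hypotheses is more detailed than the paper's one-line proof, and your remark about $\delta(G) \ge n/2+1$ versus equality is a fair observation (the paper does not address it); note that deleting edges can only destroy Hamilton cycles, not create them, so your proposed fix works trivially.
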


\begin{proof}
Set $s=2^{10}(n\log n)^{2/3}$, $m=1$ and $t=1$, and apply \Cref{thm:main2}. 
\end{proof}

Nevertheless, we suspect that being $n/8$-bounded \emph{is} sufficient to guarantee that a proper edge-colouring of a Dirac graph contains a rainbow Hamilton cycle. To be precise, we make the following conjecture.

\begin{conjecture}\label{conj}
There exists a rainbow Hamilton cycle in any proper $n/8$-bounded edge-colouring of a Dirac graph on $n$ vertices.
\end{conjecture}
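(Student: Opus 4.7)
The plan is to strengthen the strategy used for Theorem~\ref{thm:main} in two directions: we must drop the $\varepsilon$ slack in minimum degree, and, much more delicately, eliminate all $o(n)$ colour losses while the bound $n/8$ is simultaneously tight. The natural framework is still absorber plus reservoir plus near-spanning rainbow forest, but every step has to be upgraded so that the whole object is rainbow, not merely almost rainbow.

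I would first partition the colour set $\mathcal{C}$ into three disjoint reservoirs $\mathcal{C}_A$, $\mathcal{C}_R$, $\mathcal{C}_F$ (of sizes $o(n)$, $o(n)$, and $n-o(n)$ respectively) that will be used for the absorber, the connecting edges, and the near-spanning forest. Step one is to build a \emph{rainbow distributive absorber} in the style of Montgomery--Pokrovskiy--Sudakov: a path $A$ of length $o(n)$ whose edges all receive distinct colours from $\mathcal{C}_A$, and such that for every small set $U$ of leftover vertices there is an $A_U$ with the same endpoints whose colours still lie in $\mathcal{C}_A$ and are all distinct. The construction of Lemma~\ref{lem:absorber} gives the combinatorial skeleton; the rainbow upgrade requires gluing together many short ``absorbing gadgets'' that each come in several colour-disjoint variants, so that the absorption process has enough freedom to avoid any colour conflict. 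Step two is to pick a reservoir $R$ as in Lemma~\ref{lem:reservoir}, and use the colours in $\mathcal{C}_R$ (which are globally $n/8$-bounded) to guarantee that, for any endpoint-pair, one can find a rainbow length-two connection through $R$ avoiding any previously used colours.

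Step three is to apply a strengthened version of Lemma~\ref{lem:key} to $G - (V(A) \cup R)$, restricted to the colour palette $\mathcal{C}_F$, producing a rainbow path forest with at most $n^{1-\alpha}$ components covering all but $O(n^{1-\alpha})$ vertices. The key strengthening is that the forest uses exclusively colours from $\mathcal{C}_F$ and has slightly more control on codegrees so that we can afford to lose the $\varepsilon$ in the minimum degree. Finally, one connects the paths of $F$ and $A$ through $R$ using rainbow connections with colours drawn from $\mathcal{C}_R$ (greedily, avoiding earlier choices), and absorbs the residual set $U$ through $A$, producing a genuinely rainbow Hamilton cycle because $\mathcal{C}_A, \mathcal{C}_R, \mathcal{C}_F$ are disjoint by design.

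The main obstacle is the genuine Dirac condition $\delta(G) \ge n/2$: here codegrees can be zero (e.g.\ in $K_{n/2,n/2}$), so both the reservoir lemma and the near-regular nibble input behind Lemma~\ref{lem:key} break down. I expect one must split into cases via an extremal/stability dichotomy. When $G$ is far from the extremal configurations (roughly $K_{n/2,n/2}$ and the disjoint union of two cliques $K_{n/2}$ plus a perfect matching), a stability argument should recover effective codegree bounds and let the above plan go through. When $G$ is close to one of the extremal configurations, one handles it separately by leveraging the very rigid structure of those graphs together with the properness and $n/8$-boundedness of the colouring. A secondary, but still serious, difficulty is that because $n/8$ is tight, the distributive absorber cannot afford even constant-factor wastage: each colour used inside $A$ (or inside a possible absorbed variant $A_U$) has to be accounted for against its global budget of $n/8$ appearances, which will force a careful two-round random choice of $\mathcal{C}_A$ and of the absorbing gadgets.
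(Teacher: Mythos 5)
The statement you are trying to prove is not proved in the paper at all: it is stated as \Cref{conj}, an open conjecture, and the authors explicitly explain why their method falls short of it (``Since we have no control over the colours of the connecting edges, however, we are unable to find a rainbow Hamilton cycle using this method''). So there is no paper proof to compare against, and your proposal must be judged as a standalone argument. As such, it is a reasonable research programme but not a proof: every step that goes beyond \Cref{thm:main} is asserted rather than carried out.

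Two gaps are particularly concrete. First, at the exact Dirac threshold with the exactly tight bound $n/8$ there is no slack left in the core machinery: the regular spanning subgraph from \Cref{lem:CKO} has degree $r \approx \delta(G)/2 = n/4$, so the bipartite pieces fed into \Cref{lem:MPS} have relative degree $\delta \approx 1/4$, and that lemma requires the colouring to be $(1-q)\delta n/2 \approx (1-q)n/8$-bounded --- strictly below the $n/8$ you are given. The paper's remark after \Cref{lem:key} makes this explicit: the method needs $\big((1/4-o(1))\delta(G)\big)$-boundedness, which only works because of the $\varepsilon$ in the minimum degree. Your proposed fix (a stability/extremal dichotomy to recover codegree control and degree slack) is only named; the extremal cases (near $K_{n/2,n/2}$, near two cliques) and the non-extremal regularity argument are exactly where the difficulty lives, and nothing in your sketch shows they can be handled, especially since the boundedness deficit above is not a codegree issue and would persist even in the non-extremal case. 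Second, the ``rainbow distributive absorber'' and the rainbow connections through $R$ are the heart of the matter: in the paper's absorber (\Cref{lem:absorber}) the absorption step replaces an edge $x_iy_i$ by two edges $x_iu,\,uy_i$ whose colours are completely uncontrolled, and the connecting edges through the reservoir are likewise uncontrolled, which is precisely why the paper only obtains $n-o(n)$ colours. Declaring disjoint palettes $\cC_A,\cC_R,\cC_F$ does not resolve this, because the edges actually available for absorption and connection come with whatever colours the adversarial proper colouring gives them; one would need a genuinely new colour-aware absorber construction (with many colour-disjoint gadget variants whose existence in an arbitrary Dirac graph must itself be proved), and no such construction is given. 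Until those two steps are supplied, the conjecture remains open.
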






We also make the following conjecture for proper edge-colourings with no global boundedness condition. 

\begin{conjecture}\label{conj:only:proper}
Every proper edge-colouring of a Dirac graph on $n$ vertices contains a Hamilton cycle with at least $n/2 - o(n)$ distinct colours.   
\end{conjecture}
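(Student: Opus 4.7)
The plan is to prove Conjecture 4.3 by finding a large rainbow matching in the properly edge-colored Dirac graph $G$ and then extending it to a Hamilton cycle via the absorption framework of Section 3. A rainbow matching of size $n/2 - o(n)$ contributes $n/2 - o(n)$ distinct colors, and since the Hamilton cycle produced will contain this matching as a subgraph, the colors are preserved in the final cycle.

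Concretely, I would first build an absorber $A$ and a reservoir $R$ in $G$ using suitably adapted versions of \Cref{lem:absorber} and \Cref{lem:reservoir}. A subtlety here is that the current constructions rely on the bound $|N(u) \cap N(v)| \ge 2\eps n$, which requires $\delta(G) \ge (1/2+\eps)n$; the pure Dirac case $\delta(G) = n/2$ admits extremes like $K_{n/2,n/2}$ where some vertex pairs have no common neighbors at all. One would need either a case split separating ``bipartite-like'' Dirac graphs from the rest, or refined constructions that use different families of connectors in each regime. Assuming this technical hurdle is addressed, the Hamilton cycle is assembled as in \Cref{thm:main1'}: the rainbow matching is placed in $G - (V(A) \cup R)$, its edges and $A$ are threaded into a cycle using reservoir vertices (exploiting the common-neighborhood property of $R$), and the $o(n)$ uncovered vertices are absorbed by $A$.

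The main step, and the main obstacle, is the rainbow matching lemma: every proper edge-coloring of a Dirac graph on $n$ vertices contains a rainbow matching of size $n/2 - o(n)$. The naive reduction---randomly splitting each color class into two sub-classes (yielding a proper $(n/4 + o(n))$-bounded sub-coloring) and then invoking a \Cref{lem:key}-style result---would only guarantee $n/4 - o(n)$ distinct original colors, since each original color can still contribute two sub-edges to a sub-rainbow matching. To achieve the full $n/2 - o(n)$, one needs a rainbow matching result applicable directly under the $(n/2)$-boundedness inherent in a proper coloring; this substantially exceeds the $(n/8)$-bounded threshold at which \Cref{lem:MPS} is effective (the factor $1/8$ arises because the $r$-regular-subgraph reduction in \Cref{lem:CKO} only guarantees $r \approx n/4$ in a Dirac graph, halving the density parameter). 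In the extremal bipartite case the question reduces to finding a near-perfect transversal in a Latin square of order $n/2$, which lies in the Brualdi--Ryser--Stein circle of conjectures where asymptotic partial-transversal results of size $n - o(n)$ are known via iterative absorbing techniques (Keevash--Pokrovskiy--Sudakov--Yepremyan and others). Extending such arguments to general proper edge-colorings of arbitrary Dirac graphs would yield the desired rainbow matching and complete the proof.
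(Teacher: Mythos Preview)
The statement you are attempting to prove is Conjecture~\ref{conj:only:proper}; it is explicitly left open in the paper. The paper proves only the weaker bound $n/4-o(n)$ (Corollary~\ref{cor:only:proper}), and separately observes that the full $n/2-o(n)$ conclusion holds under the much stronger hypothesis $\delta(G)\ge 7n/8$, by replacing Lemma~\ref{lem:key} with Lemma~\ref{lem:APS}. There is therefore no proof in the paper to compare against; your proposal is an attack on an open problem.

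You correctly identify the central obstacle: obtaining a rainbow structure with $n/2-o(n)$ edges under only $(n/2)$-boundedness goes well beyond the $(n/8)$-bounded regime where Lemma~\ref{lem:MPS} applies, and already in the bipartite extremal case this is Ryser--Brualdi--Stein territory. However, there is a second, more structural gap in your plan that persists even if one grants the rainbow matching lemma. A matching of size $n/2-o(n)$ consists of $n/2-o(n)$ components, and threading these into a single cycle via reservoir vertices (paths of length two, as in the proof of Theorem~\ref{thm:main1'}) would consume $n/2-o(n)$ reservoir vertices and contribute $n-o(n)$ uncontrolled connecting edges. Together with the $n/2-o(n)$ matching edges this already exceeds the $n$ edges available in a Hamilton cycle; equivalently, the reservoir and absorber in Lemmas~\ref{lem:absorber} and~\ref{lem:reservoir} are of size $o(n)$ by design, and cannot absorb or connect $\Theta(n)$ pieces. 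The framework of Section~\ref{sec:HC} genuinely requires a rainbow \emph{path forest} with $o(n)$ components (as in Lemma~\ref{lem:key} or Lemma~\ref{lem:APS}), not merely a rainbow matching. So the task is not ``find a near-perfect rainbow matching and plug it in'': one would need a rainbow path forest with $n/2-o(n)$ edges and only $o(n)$ paths in an arbitrary properly coloured Dirac graph, which is a strictly harder statement and is precisely what makes the conjecture open.
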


Note that Conjecture~\ref{conj:only:proper} would strengthen Corollary~\ref{cor:only:proper}, which gives a bound of $n/4 - o(n)$, and would be close to best possible, since there exist Dirac graphs with chromatic index $n/2 + 1$. It is even possible that the following stronger version of Conjecture~\ref{conj:only:proper} holds: every proper edge-colouring of a Dirac graph contains a Hamilton cycle with $\delta(G)$ distinct colours.






Finally, let us quickly observe that the conclusion of Conjecture~\ref{conj:only:proper} holds if $\delta(G) \ge 7n/8$. This follows from our method, using the following result of Alon, Pokrovskiy and Sudakov~\cite{APS} in place of Lemma~\ref{lem:key}. 


\begin{lemma}[Lemma 3.1 of \cite{APS}]\label{lem:APS}
Let $\delta \ge \gamma > 0$ and $n \in \N$, with $3\gamma\delta - \gamma^2/2 > n^{-1}$, and let $G$ be a properly
coloured graph on $n$ vertices and $\delta(G) \geq (1 - \delta)n$. Then $G$ contains a rainbow path forest $F$ with at most $\gamma n$ vertex-disjoint paths and
$e(F) \geq (1 - 4\delta)n$.
\end{lemma}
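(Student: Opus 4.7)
The plan is to argue by a P\'osa-style rotation-extension argument adapted to the rainbow setting. Suppose for contradiction that no such forest exists, and take a rainbow path forest $F \subseteq G$ with $|F| \le \gamma n$ paths maximising $e(F)$; then by assumption $e(F) < (1-4\delta)n$, and setting $U := V(G)\setminus V(F)$ we have $|U| = n - e(F) - |F| > 3\delta n$.

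First I would establish that $|F|$ must be exactly $\gamma n$ and every endpoint of $F$ is ``stuck''. Let $v$ be an endpoint. If some $w \in U$ satisfies $c(vw) \notin c(F)$, then $F \cup \{vw\}$ contradicts maximality, so every edge from $v$ to $U$ uses a colour already in $c(F)$. If moreover $|F| < \gamma n$, one can \emph{split-and-extend}: for any such $w$ with $c(vw) = c \in c(F)$, delete the unique edge of $F$ of colour $c$ (splitting its path into two) and add $vw$; the resulting rainbow path forest has $e$ and $|F|$ each incremented by one, contradicting maximality. Hence $|F| = \gamma n$, and combining $d(v) \ge (1-\delta)n$ with $|V(F)| < (1-3\delta)n$, every endpoint has at least $(3\delta-\gamma)n$ neighbours in $U$, each corresponding by properness at $v$ to a \emph{distinct} colour of $c(F)$.

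Second I would use rotations to enlarge the set of possible endpoints. At any endpoint $v$, the number of forbidden colours is $e(F)$, so $v$ has at least $(1-\delta)n - e(F) \ge 3\delta n$ neighbours $u$ with $c(vu) \notin c(F)$; by the stuckness established above, all such $u$ must lie in $V(F)$. Each such edge $vu$ enables a local modification of $F$ — an in-path rotation if $u$ lies on $v$'s path, or a cross-path swap (cutting one of $u$'s path-edges) otherwise — yielding another rainbow path forest with the same $e(F)$ and $|F|$ but with a different endpoint. Iterating from an initial endpoint, one builds a rotation tree whose reachable endpoint set $\mathcal{R} \subseteq V(F)$ grows with a branching factor of order $3\delta n$ at each level.

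Third, and this is where I expect the principal technical obstacle, I would close the argument by a double-count. By the maximality of $F$, every $v' \in \mathcal{R}$ is itself stuck in its own rotation-derived forest, so the lower bound $(3\delta-\gamma)n$ on stuck $U$-neighbours applies to $v'$ as well. Counting triples $(v', w, c)$ with $v' \in \mathcal{R}$, $w \in U$, $c \in c(F)$, and $c(v'w) = c$ gives the lower bound $|\mathcal{R}|\,(3\delta-\gamma)n$, while by properness at $w$ the number of such triples is at most $|U|\cdot e(F)$. To drive this into a contradiction with $e(F) < (1-4\delta)n$ one must show $|\mathcal{R}|$ is nearly spanning, and it is precisely here that the hypothesis $3\gamma\delta - \gamma^2/2 > n^{-1}$ should enter: the main term $3\gamma\delta$ measures the product of the number of paths $\gamma n$ with the per-endpoint rotation branching $\approx 3\delta n$, while the correction $\gamma^2/2$ accounts for rotation sequences that return to previously reached endpoints. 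The delicate work lies in (i) tracking how the colour set $c(F')$ drifts under a sequence of rotations (each rotation swaps one colour in/out of $c(F)$) and showing this drift does not spoil the stuckness of $v'$, and (ii) controlling the overlaps between the endpoint sets produced at different nodes of the rotation tree so that $|\mathcal{R}|$ indeed reaches the size needed to close the double-count — exactly the regime to which the hypothesis $3\gamma\delta - \gamma^2/2 > n^{-1}$ is tailored.
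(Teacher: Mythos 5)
This statement is not proved in the paper at all: it is quoted from~\cite{APS} (Lemma~3.1 there), where it is established by a short, direct extremal argument on an edge-maximal rainbow path forest, with no rotation--extension machinery. Your proposal is therefore a genuinely different route, but as written it is not a proof, and two steps fail concretely. First, the ``split-and-extend'' exchange in your opening step does not do what you claim: deleting the unique $F$-edge of colour $c$ and adding $vw$ removes one edge and adds one edge, so $e(F)$ is \emph{unchanged} (the number of covered vertices and the number of paths each increase by one), and there is no contradiction with edge-maximality. Hence ``$|F|=\gamma n$'' is not established; to get such a dichotomy you would need a secondary extremal criterion (for instance, maximise $e(F)$ and, subject to that, the number of covered vertices), and the bookkeeping downstream changes accordingly.

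Second, and decisively, the closing double count cannot close even if the rotation tree were handed to you for free. Take $e(F)$ just below $(1-4\delta)n$, so $|U|\approx(4\delta-\gamma)n$. Your lower bound for the number of triples is $|\mathcal{R}|\,(3\delta-\gamma)n$ and your upper bound is $|U|\,e(F)$, so a contradiction would force $|\mathcal{R}|>\frac{(4\delta-\gamma)(1-4\delta)}{3\delta-\gamma}\,n$, which exceeds $n$ for every $0<\gamma\le\delta<1/16$ (and is already about $2n/3$ at $\delta=1/8$); since trivially $|\mathcal{R}|\le n$, no amount of control over the rotation tree can rescue this inequality in the full regime the lemma must cover. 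Moreover the lower bound itself is not justified: stuckness of a rotated endpoint $v'$ only places the colours of its $U$-edges in $c(F_{v'})$, the colour set of the \emph{rotated} forest, and each rotation swaps one colour in and out of the forest --- exactly the colour-drift issue you flag and leave unresolved, which is the heart of the difficulty in any rainbow rotation argument. Your reading of $3\gamma\delta-\gamma^2/2>n^{-1}$ as ``paths times branching minus returns'' is also speculative; after multiplying by $n^2$ the hypothesis has the shape $\gamma n\cdot 3\delta n-\tbinom{\gamma n}{2}\gtrsim n$, i.e.\ a count of about $3\delta n$ objects per path with pairwise overlaps of bounded size, which is how it is exploited in the extremal argument of~\cite{APS}, not via rotations. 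In short: steps 1--2 (once the exchange is corrected) only reproduce standard preliminaries, and the entire content of the lemma lies in the step you acknowledge is missing.
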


Applying Lemma~\ref{lem:APS} with $\delta = 1/8$ and $\gamma = o(n)$, and using the absorber and reservoir given by Lemmas~\ref{lem:absorber} and~\ref{lem:reservoir} to extend the rainbow path forest to a Hamilton cycle,  it follows that every proper edge-colouring of a graph $G$ on $n$ vertices with $\delta(G) \ge 7n/8$ contains a Hamilton cycle with at least $n/2$ distinct colours. 


\section*{Acknowledgements}

We are very grateful to our supervisor Rob Morris for many helpful discussions on the proofs and presentation of this paper.


\medskip
\bibliographystyle{alpha}
\addcontentsline{toc}{chapter}{Bibliography}

\end{document}